\documentclass[review,10pt,authoryear]{elsarticle}

\usepackage{amssymb}
\usepackage{amsmath}
\usepackage{amsthm}

\usepackage[round]{natbib}
\usepackage[top=2cm,bottom=3cm,left=2.5cm,right=2.5cm]{geometry}

\usepackage{subfigure}

\theoremstyle{definition}
\newtheorem{definition}{Definition}[section]
\newtheorem{theorem}{Theorem}[section]
\newtheorem{corollary}{Corollary}[theorem]
\newtheorem{lemma}[theorem]{Lemma}

\DeclareMathOperator*{\argminA}{arg\,min}

\usepackage{multirow}

\usepackage{colortbl}

\usepackage{setspace}

\usepackage{natbib} 
\usepackage{etoolbox}

\AtBeginEnvironment{thebibliography}{
  \singlespacing
  \setlength{\itemsep}{0pt}
  \setlength{\parskip}{0pt}
}
\usepackage{natbib} 
\begin{document}

\begin{frontmatter}

\title{Minimizing Material Waste in Additive Manufacturing through \\  Online Reel Assignment}

\tnotetext[t1]{This research has received funding from the AIMS5.0 project, which is supported by the Chips Joint Undertaking and its members, including the top-up funding by National Funding Authorities from involved countries
under grant agreement no. 101112089.}

\author[1]{Ilayda Celenk\corref{cor1}}
\ead{i.celenk@tue.nl}
\cortext[cor1]{Corresponding author}

\author[2]{Willem van Jaarsveld}
\ead{W.L.v.Jaarsveld@tue.nl}

\author[3]{Ivo Adan}
\ead{i.adan@tue.nl}

\author[4]{Alp Akcay}
\ead{a.akcay@northeastern.edu}

\affiliation{organization={School of Industrial Engineering},
            addressline={Eindhoven University of Technology}, 
            city={Eindhoven},
            postcode={5600 MB}, 
            country={the Netherlands}}

\begin{abstract}
We study a variant of the online bin packing problem that arises in filament-based 3D printing systems operating in make-to-order settings, where only a limited number of filament reels of finite capacity can be handled at once. Components are assigned to reels upon arrival and insufficient reels are discarded to be replaced with new ones, resulting in material waste. To minimize the long-run average discarded filament through an online assignment policy, we formulate this problem as an infinite-horizon average-cost Markov Decision Process and analyze the structure of policies under stochastic, sequential demand. We first show that under a random allocation policy, the system decomposes into a collection of identical single-reel processes, allowing us to derive a closed-form expression for the average waste and enabling a tractable baseline analysis. Building on this decomposition, we construct a theoretically grounded index policy that assigns each reel a score reflecting the marginal cost of assignment and prove that it constitutes a one-step policy improvement over random allocation. We embed the index-based structure within a Deep Reinforcement Learning framework using approximate policy iteration. The resulting method achieves near-optimal performance across a range of simulated and real-world scenarios. Our results demonstrate that Reinforcement Learning policy significantly reduces material waste while maintaining real-time feasibility and interpretability.
\end{abstract}

\begin{highlights}
\item Models online reel assignment in make to order three dimensional printing
\item Introduces an index based reel assignment heuristic inspired by the Gittins index
\item Proves structural results that characterize optimal reel control decisions by single reel decomposition
\item Shows the index policy outperforms standard bin packing based assignment rules
\item Improves the index policy with deep reinforcement learning to reach near optimality
\end{highlights}

\begin{keyword}
 Markov processes \sep Stochastic processes \sep Dynamic programming  \sep Packing \sep Assignment

\end{keyword}

\end{frontmatter}

\section{Introduction}\label{sec:intro}

Additive manufacturing, commonly known as \emph{3D printing}, is rapidly transforming make‑to‑order(MTO) production in aerospace, medical devices, automotive, and customized consumer goods \citep{Gibson2015, Ford2020}, in which manufacturing starts after receiving a customer request, enabling delivery of highly customized products without holding large inventories. 3D printers are particularly well suited for MTO environments as they can easily switch between different product designs without the need for expensive tooling, changeovers, or long setup times. This makes producing two entirely different items back-to-back feasible and cost-effective. However, while 3D printing technology reduces physical reconfiguration costs, it shifts the challenge to real-time production planning, as customer orders typically arrive sequentially and decisions must be made without knowing future requests. This turns production planning into an online decision problem, where manufacturers must manage resources dynamically while balancing cost, quality, and efficiency. The increasing demand for customized products, coupled with intense market competition and holding costs, forces companies to incorporate techniques to ensure cost-effective use of resources.

Among the various technologies, filament based material extrusion printers are attractive because of their modest capital cost, wide material palette in terms of color, and ability to build high value items directly from digital models. Material–extrusion printers are fed from reels or spools, as shown in Figure \ref{fig:reel_of_filament}, that typically hold 2000-5000 grams of filament. 

\begin{figure}[h]
    \centering
    \includegraphics[width=0.25\textwidth]{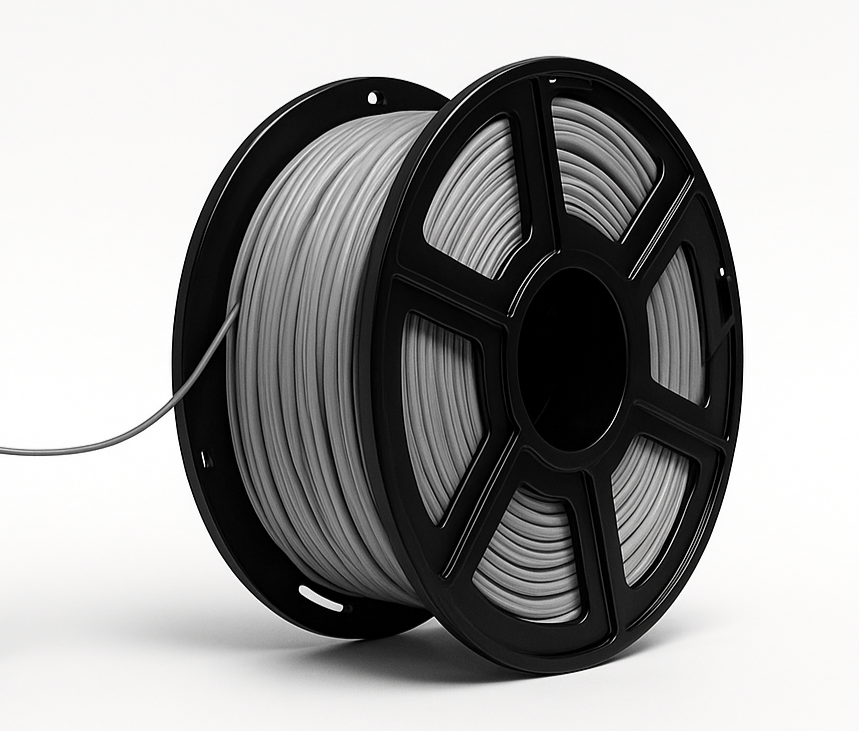} 
    \caption{Reel of Filament}
    \label{fig:reel_of_filament}
\end{figure}

Throughout the process, the filament needs to be kept at a certain temperature, allowing it to be melted slowly as it passes through the narrow tube feeding the printer's nozzle. As the reel rotates, the filament continuously feeds the machine, and the object is built up layer by layer by the movement of the nozzle. The filament on the reel gradually depletes as the object being printed grows in weight. 

Importantly, once a print job has started, swapping the reel is strongly discouraged. Changing the reel during a process can interrupt the steady flow of melted filament in the tube, leading to surface blemishes, weak inter-layer bonding, and potentially complete job failure. As a result, companies avoid changing the reel during printing to maintain component quality and avoid costly reprints. This operational constraint requires planners to assign each component production order to a reel that has sufficient remaining filament weight to complete the component without interruption. 

In practice, the number of reels that can be handled simultaneously is limited by the total number of printers therefore induces a hard upper bound on the number of active reels in the system. Once a reel’s remaining filament is insufficient to process any job, the reel is removed and discarded, creating material waste. Since such discarded leftovers accumulate over time, minimizing this waste becomes a primary objective in deciding which reels occupy the limited slots.

Due to the high variety of colors used in production and the relatively low arrival rate of component orders for each individual color, it is not possible to anticipate when the next component of a given color will arrive. As a result, production must proceed as soon as a component order arrives, with no opportunity to wait for future requests of the same color. This forces sequential decision-making for each color, where each component must be assigned to a reel immediately upon arrival. Such real time allocation decisions are necessary to maintain throughput and avoid production delays, even though they must be made without knowledge of future demands.

Efficient material usage depends on making informed reel assignments for each component while continuously tracking the remaining filament on every reel to ensure uninterrupted completion of future print jobs. Figure \ref{fig:example_assignment} illustrates a simple example involving three sequentially arriving component orders and two identical reels. Suppose the components have weights $C_1$, $C_2$, and $C_3$, and the two reels have identical initial weights $R_1$ and $R_2$. The lengths in the figure are proportional to these weights. It is assumed that no component order weighing less than $C_2$ can arrive. Components are assigned to reels sequentially, in the order of arrival, which means that $C_1$ must be assigned without knowing that $C_2$ and $C_3$ will arrive next or what their weights will be. Two possible assignments are illustrated. In both cases, the reels are left with unusable leftover filament after processing all three components, but the first option results in significantly less waste than the second. This example highlights that even in small instances, the quality of online decisions can have a substantial impact on material utilization.

\begingroup
\begin{figure}[h] 
    \centering
\subfigure[Status of the reels before order arrival.]{\label{fig:before}\includegraphics[width=5.5cm]{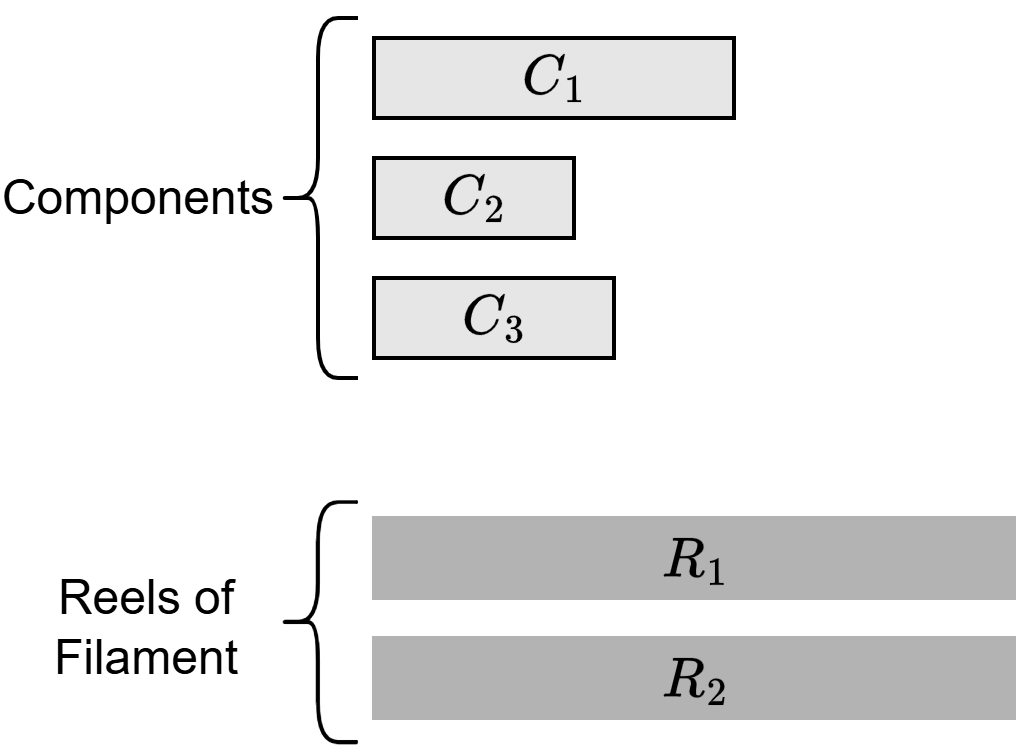}}
\hspace{0.5cm}
\subfigure[Two possible assignments after order arrivals.]{\label{fig:after}\includegraphics[width=5.5cm]{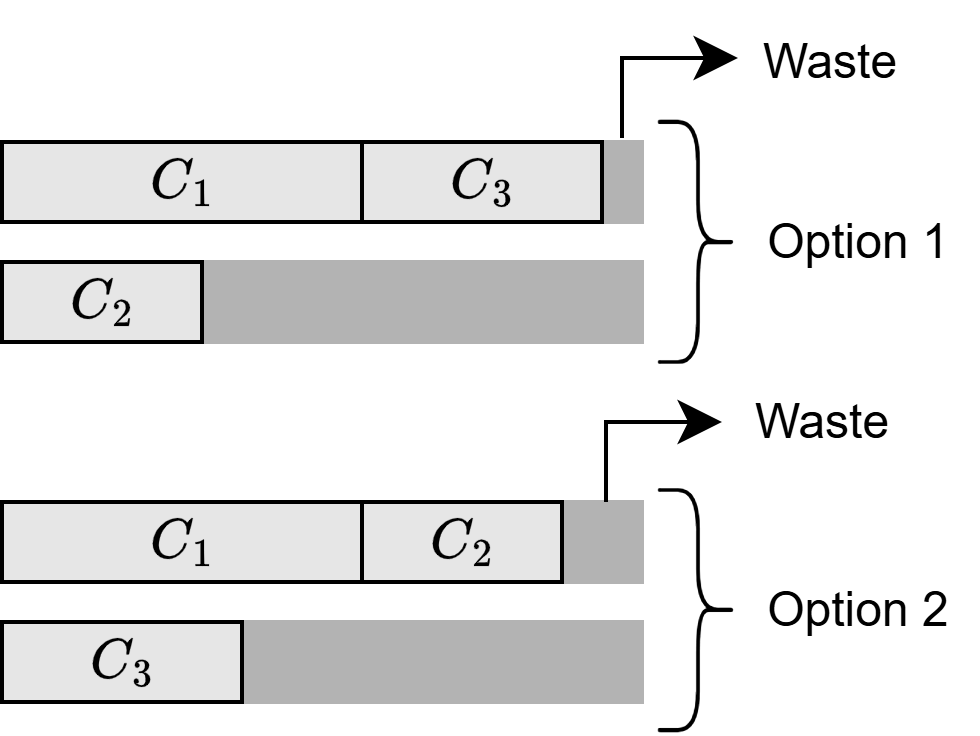}}
    \caption{Example component-to-reel assignment.}
    \label{fig:example_assignment}
\end{figure}
\endgroup

The resulting reel allocation problem in filament-based 3D printing shares similarities with one-dimensional online cutting and packing problems, such as the cutting stock problem (CSP) and the bin packing problem (BPP). In these problems, the goal is to assign sequential demand to limited-capacity resources one by one while minimizing unused capacity. Specifically, CSP focuses on cutting smaller items from larger stock materials to minimize leftover, while BPP seeks to pack items into bins to minimize the number of bins used. In the context of 3D printing, filament reels act as the stock materials or bins, and component print jobs are the cutting or packing requests. Offline problems, where all cutting requests are known in advance, have been extensively studied in the literature with comprehensive classifications and solution methods \citep{Martello1990, Dyckhoff1990, Waescher2007}.

Motivated by a real-world manufacturing environment, we consider a stylized setting in which component orders arrive sequentially with stochastic filament requirements and immediately assigned to one of the available reels. Each reel has a finite filament capacity and can only be used if it holds enough material to complete the assigned component. This leads to an online decision-making environment, where choices are irrevocable and the effect of the one-by-one assignment is not observed immediately, creating a variant requires more sophisticated methods than existing formulations of cutting-stock or bin-packing problems, which often assume full information in advance or easier conditions. These practical constraints in filament-based 3D printing give rise to a constrained resource setting, which we refer to as an N-bounded online cutting-stock problem with stochastic job sizes and capacitated bins.

In this study we address the industrial problem of minimizing filament waste in make-to-order 3D printing systems that operate with a limited number of reels and sequentially arriving component orders with stochastic weights. We model the reel assignment as an infinite-horizon average-cost Markov Decision Process and seek policies that select a reel based on the current component and reel weights. We propose an index policy, a widely used approach in sequential decision making that assigns a real-valued score to each action in a given state and selects the action with the best score, which yields interpretable and computationally efficient approximations to optimal policies.

Our first contribution is to show that, under the random allocation policy, the system dynamics decompose into independent single-reel MDPs, enabling a tractable analysis of the baseline behavior. Building on this structure, and in contrast to existing index-based methods used in bin packing problems, we derive a theoretically grounded index policy that exploits single-reel dynamics and prove that it constitutes a one-step policy improvement over random allocation. We then quantify this improvement both analytically and through empirical evaluation. Finally, we iteratively refine the index policy using Deep Reinforcement Learning. The resulting policy achieves near-optimal performance while maintaining interpretability and robust generalization across diverse component weight distributions.

This paper is organized as follows: \S\ref{sec:literature} situates our work within the cutting‑stock, online optimization, and additive‑manufacturing literature,  \S\ref{sec:Problem} summarizes the industrial setting and formalizes the MDP. Methods are developed in \S\ref{sec:policies}, numerical experiments are reported in \S\ref{sec:results}. Finally, \S\ref{sec:discussion} concludes and highlights avenues for future research.

\section{Literature Review}\label{sec:literature}

We organize the review along three streams: resource allocation in additive manufacturing,  online cutting stock and bin packing, and stochastic control frameworks with reinforcement learning.

Most research in additive manufacturing (AM) focuses on scheduling problems \citep{Anton2022, Agnol2022}, such as makespan minimization \citep{Alicastro2021}, number of tardy jobs \citep{Sormaz2023}, batch printing or nesting \citep{Pinto2024}, rather than material-aware planning. For example, \citet{Li2017} introduce one of the first mathematical formulations for production planning in AM, addressing part-to-machine assignments to minimize cost per unit volume, though material handling is not modeled. Similarly, \citet{Chergui2018} propose an integrated approach to production scheduling and nesting in additive manufacturing, combining job sequencing with 2D placement heuristics to improve build volume utilization. However, their model does not incorporate material constraints such as filament depletion or reel capacity. \citet{Aloui2021} address a machine-specific scheduling problem under technological constraints like part preheating and post-processing times, using a hybrid heuristic. Yet, similar to other scheduling approaches, their model does not track reel-level filament usage or account for filament consumption during job execution. 

Despite growing attention on AM scheduling problems, studies using hybrid heuristics, genetic algorithms, and recent reinforcement learning models focus on machine scheduling or job ordering, not filament-level constraints \citep{Sun2025, Alicastro2021}. Material planning is often abstracted away; either reels are assumed infinite, or a reel change is modeled as zero‑time, zero‑cost. In contrast, our work studies a novel but realistic constraint in filament-based AM: \emph{material planning under no mid-print reel replacement}. This setting introduces an online assignment challenge that was not captured in prior models, where material utilization must be optimized as components with stochastic weight requirements arrive sequentially and assigned to capacitated resources. To the best of our knowledge, there is currently no study that simultaneously models the dynamic utilization of reels in online additive manufacturing settings, considering stochastic arrivals, capacity limitations, and costs associated with filament discard.

The classic one-dimensional Cutting Stock Problem (CSP) and Bin Packing Problem (BPP) are foundational in combinatorial optimization aiming to minimize waste or bin usage while satisfying demand constraints. These problems have been studied extensively for both their theoretical and practical relevance in manufacturing, logistics, and scheduling. 

Early theoretical work such as that by \citet{Johnson1973} established tight bounds on competitive ratios for the online BPP, leading to practical algorithms like First-Fit (FF), Best-Fit (BF), and Next-Fit (NF) \citep{Albers2000}, and Harmonic variants \citep{Seiden2002}. FF algorithm keeps a list of bins in the order they are opened and assigns the item to the first bin it can fit. Similarly,  BF algorithm keeps a list of open bins but in an increasing order of their remaining capacity and assigns the item to the first bin it can fit. NF algorithm considers a single open bin and moves to a new one if the item doesn't fit. Harmonic-based class algorithm tries to pack items of similar sizes, and employs NF algorithm for each class. These strategies remain influential due to their simplicity and low computational overhead and a comprehensive study on exact methods developed for the online 1D-CSP and 1D-BPP can be found in the literature \citep{Delorme2016}. 

Several works extend CSP to incorporate production realities, such as due dates, machine scheduling, or heterogeneous order types. \citet{Arbib2014} study cutting stock with due dates, proposing integer programming models for balancing waste and deadline adherence. Similarly, \citet{Reinertsen2010} examine a one-dimensional CSP with due dates, highlighting the trade-off between trim loss minimization and timely order fulfillment. Although, these models bridge classic CSP with temporal scheduling, they assume that all the items assigned to the same material are completed at the same time which contradicts to the 3D printing environment we have.

In practice, MDP formulations for stochastic CSPs often arise in multi-period or dynamic contexts. For instance, \citet{PitombeiraNeto2022} formulated the stochastic CSP as an MDP to determine how many objects to cut in each pattern at each decision epoch given the current inventory levels. They developed a reinforcement learning approach using approximate policy iteration to improve their proposed heuristic. This study demonstrates the effectiveness of heuristic techniques in addressing online cutting stock problems, offering significant improvements in solution quality and computational efficiency across different problem variants. A related stream examines joint CSP and production planning. For example, \citet{Gramani2006} and \citet{Andrade2021} model integrated lot-sizing and cutting stock, showing how trim-loss and inventory dynamics interact. While these studies typically assume make-to-stock environments, they underline the value of combining capacity planning with material utilization.

Originating from multi-armed bandit problems and later extended to stochastic scheduling and resource allocation, index policies offer a balance between tractability and performance. In BPPs, differential index and ratio index policies have been proposed as interpretable and computationally cheap heuristics to prioritize assignments to minimize overfill of the bins \citep{AsgeirssonStein2009, Peeters2019}. While these approaches often perform well in practice, they are typically not derived from formal control-theoretic principles and may not guarantee improvement over naive baselines. Our work advances this line of research by deriving an index policy that corresponds to a one-step improvement over the random allocation policy, grounded in the structure of the underlying MDP.

Despite the progress, few studies address online CSP under stochastic item sizes, trim objectives, and a bounded number of bins, which is a crucial for reel utilization in 3D printing. Most algorithms optimize either worst-case waste or tardiness but do not model dynamic replacement costs or irreversible assignments as in our reel assignment problem. Despite the maturity of CSP/BPP research, no known studies to date jointly consider a bounded number of containers (reels), stochastic component sizes, and trim loss as a primary objective in an online, MTO setting. This positions our problem at the intersection of online decision-making and bounded-capacity stochastic control, filling a distinct gap between theoretical BPP/CSP studies and their limited application to additive manufacturing.

Stochastic control frameworks such as MDPs have long been used to model sequential decision-making problems under uncertainty \citep{Puterman2014}. The use of MDPs has also been extended to various manufacturing and logistics problems, including bin packing, cutting stock, and capacity management. In the context of packing and resource allocation, MDPs provide a principled way to balance immediate costs with long-term performance, especially in systems with stochastic arrivals and capacity constraints. MDP formulations typically rely on exact or approximate dynamic programming methods, which become computationally intractable as system size increases. To tackle the curse of dimensionality, approximate dynamic programming (ADP) or DRL approaches are prompted \citep{Powell2011}.  

Recent advances in reinforcement learning (RL) have provided scalable alternatives to classical dynamic programming. RL methods approximate the value function or policy through simulation-based learning, enabling practical solutions for large-scale problems. Reinforcement learning is increasingly explored for online variants of CSP. \citet{PitombeiraNeto2022} introduce an RL-based approach for stochastic cutting stock problems, showing it outperforms classic methods under uncertainty. Likewise, \citet{zhao2022} and \citet{zhao2023} formulate the online 3D bin packing problem as an MDP with constrained action space, then explore DRL solutions using an on-policy actor-critic framework. These online approaches complement deterministic metaheuristics and are especially relevant for additive manufacturing, where orders arrive sequentially and real-time decisions are critical.

Deep Reinforcement Learning (DRL) for machine scheduling is increasingly recognized as a promising research direction. \citet{Khadivi2025} provide a structured review of RL applications in additive manufacturing scheduling, identifying open challenges in integrating material constraints and online decision-making. \citet{Rinciog2022} propose a standardized framework for evaluating RL algorithms in stochastic production scheduling, emphasizing the importance of interpretability and practical feasibility.

Unlike previous work that focuses on static batch planning or stochastic online packing, we address the unique challenges of sequential material allocation in capacitated filament-based additive manufacturing. In contrast to the existing index heuristics, our approach combines structural decomposition with approximate policy improvement to bridge stochastic control and reinforcement learning, delivering both theoretical performance guarantees and scalable, real-time algorithms for reel utilization.

\section{The Online Allocation Model}\label{sec:Problem}

This section provides an overview of the practical setting, modeling assumptions, and formal structure of the reel assignment problem. \S\ref{sec:industry} describes the industrial context, highlighting the operational environment, material constraints, and motivations for online decision-making. \S\ref{sec:operatormodel} presents the operator-level decision problem, focusing on how components are sequentially assigned to reels and how filament usage and waste are tracked. \S\ref{sec:MDPmodel} formulates the MDP, defining the state and action spaces, transition dynamics, and cost structure that will be used in the development and analysis of decision policies.

\subsection{Industrial context}
\label{sec:industry}
This study is motivated by a real-world production setting at a company that manufactures custom-designed luminaries using material extrusion 3D printing technology. Customers can choose from a wide range of shapes, colors, and textures, and each component is printed only after an order is placed. To stay competitive, the company promises short delivery times while committing to sustainable production practices by using recycled materials. Customization, MTO production model, and sustainability goals create unique operational challenges that require fast and reliable decision-making during production.

To meet both functional and aesthetic requirements, the company uses polycarbonate filament as its primary printing material, which is supplied in 5-kilogram reels. Polycarbonate offers high strength, heat resistance and superior optical properties, which are essential for light-diffusing applications. It also provides excellent versatility in terms of aesthetics. It is available in a wide variety of colors and supports creating diverse surface textures through 3D printing, enabling the company to offer extensive customization even for relatively simple products. These features make polycarbonate ideal for indoor and outdoor lighting products where aesthetic quality, mechanical durability, and thermal reliability are critical. 

Despite its advantages, polycarbonate filament introduces several operational challenges, particularly with respect to material handling and process control. Unlike more commonly used thermoplastics such as PLA or PETG, polycarbonate is highly hygroscopic and degrades quickly when exposed to moisture, which can severely impact print quality. Furthermore, it requires high extrusion temperatures, increasing the risk of nozzle wear and clogging, that are not easily managed in fully automated environments. For these reasons, filament reels are loaded and changed manually by an operator to preserve print reliability and consistency. Moreover, reels cannot be swapped during a print job, as interruptions can lead to visible defects or printer failures. These constraints require that reel assignment must be done carefully before each job starts, based on available capacity and the correct color.

A further challenge is the need for online decision-making, which arises from the company’s customization options, short lead time commitments, along with its MTO production policy without inventory for finished products. Even simple designs can be produced in many different colors and textures, and many of these variants are ordered in low volumes. This results in highly fragmented material utilization across a large number of color-specific reels. Due to strict lead times, the company has only four days for printing components. This leaves no opportunity to delay production to accumulate additional component orders of the same color, which might otherwise improve efficiency. Instead, each job must be assigned to a reel immediately upon arrival, based only on the current state of the system, highlighting the online nature of the problem.

In line with the company's objectives, the central concern in this production setting is minimizing the trim loss which refers to the amount of filament left unused on a reel because no future job can be assigned to it before it is replaced. Since reels cannot be swapped during an active print job, any unwise allocation risks generating material waste. This makes it essential to keep track of the remaining filament on each reel and to make assignments that minimize such residuals. Minimizing trim loss is particularly important in this context for two key reasons. First, it directly contributes to material efficiency and waste reduction, supporting the company’s commitment to sustainable production. Second, unused filament incurs real cost, especially when color variants have low demand and leftover material cannot be reused. 

Although the company operates many printers, the number of reels that can be actively maintained at any given time is limited. This constraint arises from the manual nature of reel handling, the wide variety of color variants, and limited manpower, as well as the need to fully prepare each reel in advance. Preparation includes drying the filament, transporting the reels to 3D printers, collecting finished components. These setup and maintenance requirements introduce significant operational overhead, and when combined with limited manpower, they impose a practical limit on the number of reels that can be managed concurrently. In addition, reels are color specific, and color changes during a print job are not permitted. Therefore, the assignment problem naturally decomposes into color-specific subproblems each with specific number of reels. It is therefore both operationally realistic and analytically appropriate to model a fixed number of reels per color. 

Given the constraints imposed by material handling, customization requirements, and MTO production, the company faces a sequential decision-making problem in which each component order must be assigned to a suitable reel in real time. These assignments must balance material efficiency, operational feasibility, and adherence to aesthetic standards, all without knowing the future orders. The problem is inherently dynamic and stochastic for each color. The operator must act immediately upon the arrival of each component order, based on the current state of the system, while accounting for the uncertain weights of future jobs. Moreover, each assignment decision affects not only the selected reel but also the future usability of reels, making the long-term consequences of each choice non-trivial. A formal model that captures this sequential structure, the stochastic nature of component orders and weights, and the evolving state of the reels is therefore essential for analyzing and improving reel utilization.

\subsection{Operator model} \label{sec:operatormodel}

While reel assignment is part of a broader manufacturing workflow, we isolate this subproblem to focus on filament utilization. Thus, this study focuses exclusively on designing a policy for the assignment of components to reels with the objective of minimizing filament waste. To facilitate tractable analysis, we assume that the filament requirement for each component, expressed in terms of weight, takes values from a finite discrete set.

We develop a discrete-time model to capture the reel assignment process in a 3D printing production environment. Rather than assuming fixed-length time intervals, we abstract the process through event-driven decision epochs triggered by incoming component orders. The time between arrivals is not explicitly modeled and may vary in practice; instead, we assume that component orders arrive sequentially, independently, and identically distributed. Under this formulation, we focus on minimizing the expected filament waste per period, aligning the model’s objective directly with the efficiency of material utilization.

The operator is tasked with assigning each arriving component orders to one of the $N$ available reels. Formally, the reels are labeled $1$ through $N$, and the weight of reel $n$ in the system is denoted as $w_n \in \mathbf{W} = \{ 0, 1,\ldots, B-1\} $ where $B$ represents the weight of a new reel for $n \in \mathcal{N}=\{ 1,\ldots, N \}$. Component orders arrive sequentially, with each component’s weight independently drawn from a known discrete probability distribution. Let $X$ be the random variable over $\mathbf{X} = \{ 1,\ldots, B\}$ that represents the weight of an arriving component. The probability that the upcoming request is for a component that weighs $x$ grams is denoted by $p(x):=P(X=x)$, provided that $\sum_{x \in \mathbf{X}} p(x) = 1$. Right after the request for a component, the operator observes the remaining filament weight on the reels and component weight, then selects a reel to assign the component for printing. The operator updates the remaining filament weight on the reel in the system. However, if the chosen reel does not have enough filament for the assigned component to be printed, then it is discarded and replaced with a new reel of weight $B$ grams to be used immediately. After assigning a component of weight $x$ to a reel of weight $w$, the reel weight is updated instantaneously by

{
  \setlength{\abovedisplayskip}{4pt}
  \setlength{\belowdisplayskip}{4pt}
\begin{equation}
   e(w,x) = \begin{cases} w-x &\mbox{if } w \geq x \\
    B-x & \mbox{otherwise } \end{cases}.
    \label{eq:e(w,x)}
\end{equation}
}

In an environment with limited number of reels, filament waste is inevitable as a result of discarding a reel. Although, discarding a reel creates immediate filament waste, it may allow a better assignments in the upcoming periods, ultimately leading to less future waste. As a result of assigning a component of weight $x$ to a reel of weight $w$, the discarded reel weight is determined by

{
  \setlength{\abovedisplayskip}{4pt}
  \setlength{\belowdisplayskip}{4pt}
\begin{equation}
   c(w,x) = \begin{cases} 0 &\mbox{if } w \geq x  \\
    w & \mbox{otherwise } \end{cases}.
    \label{eq:c(w,x)}
    \end{equation}
}

The decision to discard and replace a reel trades off immediate filament waste with potential future flexibility in handling upcoming component weights. 

\subsection{MDP formulation}\label{sec:MDPmodel}

In this paper, the problem is formulated as an infinite horizon discrete time MDP. We next describe the state space, actions transition dynamics and costs of this MDP. The state space $\mathcal{S}$ captures both the weights of the reels and the weight of the arriving component. Let $x_t$ be the weight of the component arriving at the beginning of period $t$. We assume ${x_t}$ for $t = 0, 1, \ldots $ are independent and identically distributed random variables drawn from a discrete distribution $X$ over $\mathbf{X}$. The state at the beginning of period $t$ is denoted by $s_t=(w_{1t},\ldots, w_{Nt}, x_t) \in \mathcal{S} = \mathbf{W}^{N} \times \mathbf{X} $, and captures the current filament weight on all the $N$ reels just before the assignment, as well as the weight of the component to be assigned.

 Each period corresponds to a single decision epoch where the action determines the selected reel for the assignment. An action $a \in \mathcal{N}$ denotes the index of the reel selected in state $s$ to assign the component. The remaining weight on the selected reel needs to be updated after assignment. This can be broken down into two cases based on the component and selected reel weights. In case the weight of the component is smaller than the selected reel's weight, the weight of the reel decreases by the component's. However, if the amount of filament on the selected reel is not sufficient for the component to be printed, it needs to be replaced with a new one of weight $B$ before printing. Based on these two cases, the weight of the selected reel is updated to $e(w_a,x)$ by Equation \eqref{eq:e(w,x)}. After the assignment, the next component order of weight $x_{t+1} \sim X$ arrives. Thus, the state of the system at the beginning of the next period $t+1$ is $s_{t+1}=(w_{1t},\ldots, e(w_{a_t},x_t),\ldots,w_{Nt}, x_{t+1})$.

The cost of waste is incurred only when the assignment requires a reel replacement. In this case, the insufficient reel is discarded. The cost of action $a$ in state $s_t$ is denoted by $\mathcal{C}(s_t,a_t)=c(w_{a_t}, x_t)$ by Equation \eqref{eq:c(w,x)}. We aim to  minimize the expected average cost per period $g=\lim_{T \rightarrow \infty} \frac{1}{T} \sum_{t=1}^T \mathbb{E} [\mathcal{C}(s_t,a_t)]$.

The system evolves through discrete decision epochs in an infinite horizon setting. Starting with state $s^t = (w_{1t},\ldots, w_{Nt}, x_t)$ at the beginning of decision epoch $t$, Figure \ref{fig:model} demonstrates how the state changes based on the action $a_t$. At each decision epoch we update the remaining weight on the reels based on the action. The action specifies the index of the selected reel and only that reel is adjusted, while the others remain unchanged. 

\begin{figure}[h]
    \centering
    \includegraphics[width=0.6\textwidth]{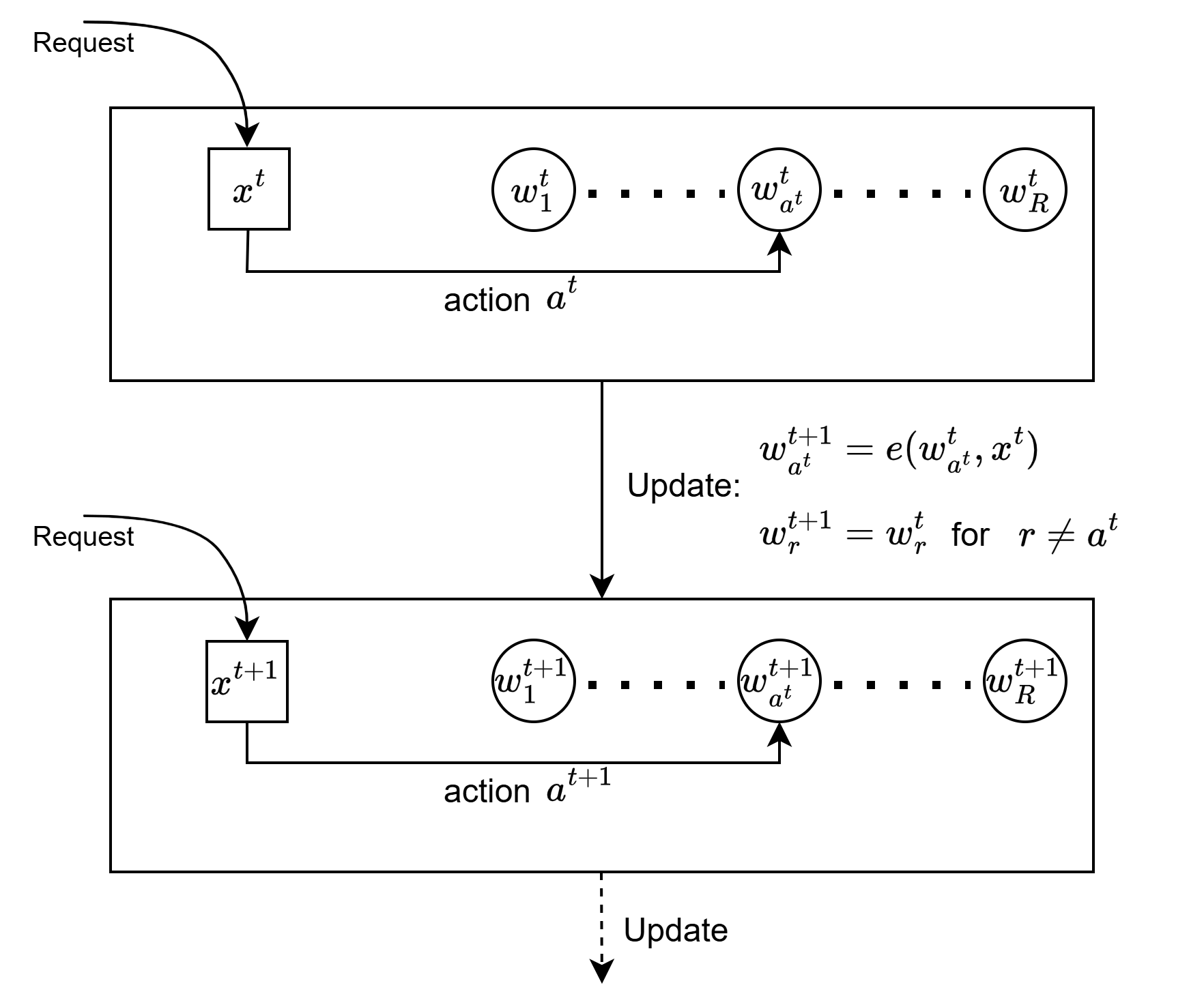} 
    \caption{MDP Model}
    \label{fig:model}
\end{figure}

\section{Methods}\label{sec:policies}

In this section, we present a range of solution approaches for the reel assignment problem. We first introduce several heuristic policies in \S\ref{sec:AnalyticalPolicies}, including First Fit and Best Fit, which rely on straightforward allocation rules based on current reel capacities.

\S\ref{sec:RandomPolicy} discusses the Random policy, where reels are selected uniformly at random. We analyze this policy in more detail to gain structural insights into single-reel behavior. This analysis allows us to decompose the system into analytically tractable single-reel processes. And in \S\ref{sec:IndexPolicy}, we leverage these insights to construct an Index policy, which uses a value function derived from the single-reel model to prioritize reels based on their long-term waste. Finally, Section~\ref{sec:DRLPolicy} introduces a DRL approach that approximates the optimal policy through simulated experience and function approximation, enabling adaptive and scalable decision-making in large state spaces.

\subsection{Analytical Policies}
\label{sec:AnalyticalPolicies}
The reel assignment problem falls within the class of combinatorial optimization problems, where the number of possible action sequences grows exponentially with the number of components and available reels. In such settings, computing the optimal policy becomes intractable, especially under online constraints where decisions must be made immediately and without knowledge of future arrivals. As a result, fast and interpretable approximate methods are essential for practical deployment.

In many online resource allocation and cutting stock problems, simple heuristic policies are widely used due to their ease of implementation and interpretability. These heuristics operate based on immediate information and follow predefined assignment logic that can be applied without extensive computation or training. In our setting, we adopt two classical heuristics: \textbf{First Fit} and \textbf{Best Fit}. However, unlike classical bin-packing formulations that allow opening new bins, our model considers a fixed number of $N$ reels. Due to this constraint, we consider the adapted versions of these heuristics.

The First Fit heuristic assigns the incoming component to the first reel, in a fixed ordering, that has enough capacity to accommodate it. If there is no feasible reel, First Fit replaces the first reel that would incur the least immediate cost upon replacement. This method is computationally fast as it does not iterate over the whole reel set and easy to implement but may lead to suboptimal long-term behavior as it does not compare reels unless reel change is necessary.

The Best Fit heuristic selects the reel with the smallest non-negative residual capacity after assigning the current component. If multiple reels are sufficient to print the component, the reel with the least remaining weight is chosen to help preserving reels with larger capacity for future components. If no reel has sufficient remaining filament to print the component, a reel must be replaced. In this case, Best Fit chooses the reel whose replacement would incur the least immediate waste. This logic prioritizes efficient usage of partially filled reels and avoids wasteful replacements when possible. 

These heuristics are frequently used in practice and often serve as competitive baselines. However, they are inherently myopic as they optimize for the current decision without modeling future implications or incorporating stochastic knowledge of incoming components. In contrast to dynamic policies derived from value functions, these rules lack foresight and are not guaranteed to perform well in the long run. 

To better understand the performance limitations of such heuristic policies and to design improved strategies, we next analyze the Random Policy. Although uniformly random allocation is suboptimal, it offers two key advantages: (i) it provides a tractable benchmark with analytically derivable cost and value functions, and (ii) it exposes structural properties of the system that can be exploited to construct more sophisticated policies that are grounded in long-run optimization.

\subsection{Random Policy}
\label{sec:RandomPolicy}

The random assignment policy, denoted by $\pi(R)$, corresponds to the case where each incoming component is assigned to one of the $N$ available reels with equal probability. The purpose of this analysis is to understand the cost structure and dynamics under random assignment in order to build the foundation for a performance-improving index policy that will be developed in subsequent sections. Although suboptimal, random assignment policy provides a valuable theoretical baseline and allows us to decompose the multi-reel system into analytically tractable subproblems. 

To support our analysis, we introduce two variations of the system that differ in the timing of information availability. The \textit{augmented model} represents the original MDP as formally defined in Section~\ref{sec:MDPmodel}, where the weight of the incoming component is known before selecting a reel. In contrast, the \textit{naive model} is a simplified version in which a reel is selected prior to observing the component weight. Although the naive model does not reflect the real-time decision-making structure of the actual setting, it serves as a valuable analytical device. In particular, it provides a simplified and intuitive view of the system’s evolution, which helps reveal structural properties of the problem. Moreover, it enables us to derive explicit expressions and formulate the augmented model in a principled way. Thus, the naive model serves as a conceptual and analytical device that supports both our theoretical analysis and policy design.

Although the decision context differs between the two models, both evolve as Markov Reward Processes (MRP) under the random policy, driven by identical stochastic elements; uniform reel selection and distribution of component weights. Moreover, the naive model under random policy provides a component-free model that can be practically decomposed into single reel problems. More significantly, it allows us to construct an explicit formulation for the augmented problem with multiple reels. 

In the augmented $N$-reel MRP, the system state includes the current component weight, reflecting the full information setting defined in our original MDP as $(w_1, \ldots, w_N, x)$. Since $x$ is observed prior to reel assignment, the expected cost and next state can be computed conditioned on this value. While the state transitions depend on both the randomness of the policy and the component distribution, the immediate cost depends only on the selected reel and the known value of $x$, and is thus unaffected by the overall distribution $p(x)$. 

\begin{definition}
\textit{Define $\hat{h}_N^{\pi(R)}(w_1, \ldots, w_N, x)$ as the bias function and $\hat{g}_N^{\pi(R)}$ as the long-run average cost associated with the augmented $N$-reel model under $\pi(R)$. These functions satisfy the Bellman equation:}
{
  \setlength{\abovedisplayskip}{4pt}
  \setlength{\belowdisplayskip}{4pt}
\begin{equation}
    \hat{h}_N^{\pi(R)}(w_1,\ldots,w_N,x) =  - \hat{g}_N^{\pi(R)} + \frac{1}{N}  \sum_{n=1}^N \Big[ c(w_n,x) + \sum_{x'} p(x')  \hat{h}_{N}(w_1,\ldots, e(w_n,x),\ldots, w_N,x')  \Big]  
    \label{eq:bellmann_hNhat}
\end{equation}
}
\end{definition}

In the naive $N$-reel model, the component weight $x$ is not observed at the time of reel selection, and the state consists only of the reel weights $(w_1, \ldots, w_N)$. This represents a setting with limited information, where the assignment decision must be made without knowing the specific requirement of the incoming component. Consequently, both the expected immediate cost and the state transition are influenced by the uniform reel selection and distribution of component weights.

\begin{definition}
\textit{Define $h_N^{\pi(R)}(w_1, \ldots, w_N)$ as the bias function and $g_N^{\pi(R)}$ as the long-run average cost associated with the naive $N$-reel model under $\pi^{R}$. These functions satisfy the Bellman equation:}
{
  \setlength{\abovedisplayskip}{4pt}
  \setlength{\belowdisplayskip}{4pt}
\begin{equation}
    h_N^{\pi(R)}(w_1,\ldots,w_N) =  - g_N^{\pi(R)} + \frac{1}{N} \sum_{n=1}^N \sum_{x'} p(x') \Big[ c(w_n,x') + h_N^{\pi(R)}(w_1,\ldots, e(w_n,x'),\ldots, w_N) \Big] .
    \label{eq:bellmann_hN}
\end{equation}
}
\end{definition}

Although the two MRPs differ in their state representations, they are exposed to the same stochastic dynamics. In both of the processes, the costs use the same independent and identically distributed sequence of component weights $\{ x_t \sim p(x) \}$. The difference between the Bellman equations lies in the timing of observing the component weight $x$. Therefore, in the long run, both processes observe the same action-cost realizations with the same reel weights. As such, they share the same long-run average cost (gain). We formalize this result by the following lemma.

\begin{lemma}
Consider the random policy $\pi(R)$ under which each reel is selected with equal probability of $\frac{1}{N}$. Let $\hat{g}_N^{\pi(R)}$ and $g_N^{\pi(R)}$ denote the long-run average costs of the augmented and naive $N$-reel MRPs under $\pi(R)$, respectively, with Bellman equations given by \eqref{eq:bellmann_hNhat} and \eqref{eq:bellmann_hN}. Then,
{
  \setlength{\abovedisplayskip}{4pt}
  \setlength{\belowdisplayskip}{4pt}
    \begin{equation}
        \hat{g}_N^{\pi(R)} = g_N^{\pi(R)}
    \qquad \forall N \in \mathbb{Z^+}.
    \label{eq:gN_gNhat}
    \end{equation}
    }
    \label{lemma:gN_gNhat}
\end{lemma}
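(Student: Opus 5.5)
The plan is to show that the augmented and naive processes are, in distribution, the same process observed at different moments. Once that is established, the equality of gains follows from averaging the naive Bellman equation \eqref{eq:bellmann_hN} against the augmented one \eqref{eq:bellmann_hNhat}.

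Concretely, define the candidate function
\[
\tilde h(w_1,\ldots,w_N,x) := \frac{1}{N}\sum_{n=1}^N \Big[ c(w_n,x) + h_N^{\pi(R)}(w_1,\ldots,e(w_n,x),\ldots,w_N)\Big] - g_N^{\pi(R)},
\]
where $h_N^{\pi(R)}$ is any solution of \eqref{eq:bellmann_hN}. The key observation is that \eqref{eq:bellmann_hN} says exactly
\[
h_N^{\pi(R)}(w) = \sum_{x'} p(x')\,\big[\tilde h(w,x')\big].
\]
In words, the naive bias is the $p$-average of the augmented candidate.

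Substituting this identity into the right-hand side of \eqref{eq:bellmann_hNhat} with gain $g_N^{\pi(R)}$ gives
\[
-g_N^{\pi(R)} + \frac{1}{N}\sum_n\Big[c(w_n,x) + \sum_{x'}p(x')\,\tilde h(\ldots,e(w_n,x),\ldots,x')\Big],
\]
and the inner sum equals $h_N^{\pi(R)}(\ldots,e(w_n,x),\ldots)$. The whole expression therefore reduces to $\tilde h(w,x)$. Hence the pair $(\tilde h, g_N^{\pi(R)})$ solves the augmented Bellman equation \eqref{eq:bellmann_hNhat}.

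The remaining step is the one I expect to be the main obstacle: concluding that this forces $\hat g_N^{\pi(R)} = g_N^{\pi(R)}$. For this I would use the standard fact for finite-state MRPs that any bounded solution $(h,g)$ of the Poisson equation identifies the gain. Iterating the equation $T$ times gives
\[
\sum_{t<T} \mathbb{E}[\mathcal C_t] = Tg + h(s_0) - \mathbb{E}[h(s_T)].
\]
Dividing by $T$ and letting $T\to\infty$ yields $g = \lim_T \frac1T\sum_t\mathbb{E}[\mathcal C_t]$, since $h$ is bounded on the finite space $\mathbf W^N\times\mathbf X$. This argument needs no unichain assumption, because the limit exists as a Cesàro limit of a finite Markov chain and the telescoping identifies it for every initial state.

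Two technical points need to be covered. First, a solution $h_N^{\pi(R)}$ of \eqref{eq:bellmann_hN} exists, which again follows from finiteness of the naive chain. If the chain were multichain, the gain could depend on the initial state; then the argument should be run with state-dependent gains, still using the same averaging identity. Second, the initial states must be matched: the augmented start $(w,x_0)$ with $x_0\sim p$ corresponds to the naive start $w$.

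As a sanity check, I would add a pathwise coupling remark. Driving both processes with the same sequence of i.i.d.\ $x_t$ and the same uniform reel choices produces identical reel trajectories and identical costs $c(w_{a_t},x_t)$. The average costs therefore coincide realization by realization, which matches the intuition stated before the lemma.
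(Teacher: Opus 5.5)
Your proof is correct, but your main argument takes a different route from the paper's.

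\textbf{The paper's route.} The paper argues directly from the definition of the gain as a Ces\`aro limit. It notes that the naive one-step expected cost $\frac1N\sum_n\sum_x p(x)c(w_{n,t},x)$ equals the augmented one-step cost $\frac1N\sum_n c(w_{n,t},x_t)$ averaged over $x_t$. It then concludes that the two limits coincide because both processes ``evolve under the same randomness.'' That is essentially your closing coupling remark, left implicit: the paper never says that the reel-weight process has the same law in both models. That fact holds because $\pi(R)$ ignores $x_t$ and $x_t$ is independent of $w_t$.

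\textbf{Your route and what it buys.} You lift a naive Poisson solution to an augmented one, then use the telescoping identity to show that any solution of the Poisson equation determines the gain from every initial state. This is more rigorous and purely algebraic. It also yields more: your $\tilde h$ is exactly the representation \eqref{eq:h_Nhat+=h_1} of Theorem~\ref{thm:h_Nhat+=} with $\theta=-g_N^{\pi(R)}$. So you obtain that theorem and the lemma together. By contrast, the paper's proof of Theorem~\ref{thm:h_Nhat+=} only reduces the Bellman residual to $\hat g_N^{\pi(R)}-g_N^{\pi(R)}$, and it needs Lemma~\ref{lemma:gN_gNhat} as an input to make that residual vanish.

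\textbf{One correction.} A solution of \eqref{eq:bellmann_hN} with a \emph{scalar} gain does not follow from finiteness alone, since finite multichain chains can have state-dependent gains. It requires constant gain, for example a unichain structure, and that holds here. Fix any $x^*$ with $p(x^*)>0$. With positive probability the random policy selects reel $1$ with component $x^*$ repeatedly until it resets to $B-x^*$, then does the same for reel $2$, and so on. Hence $(B-x^*,\ldots,B-x^*)$ is reachable from every state. In any case the lemma's hypotheses already posit scalar solutions, so your multichain fallback is unnecessary.
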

\vspace{-2\baselineskip}
\begin{proof}
We define the long-run average costs under the random policy $\pi(R)$ in both MRPs using their one-step costs. In the augmented $N$-reel MRP under $\pi(R)$, the current component weight $x_t$ is observed at time $t$, and the one-step cost is given by:
{
  \setlength{\abovedisplayskip}{4pt}
  \setlength{\belowdisplayskip}{4pt}
\begin{equation*}
\frac{1}{N} \sum_{n=1}^N c(w_{n,t}, x_t)
\end{equation*}
}
where the randomness comes from the distribution $x_t \sim p(x)$.
The long-run average cost is therefore:
{
  \setlength{\abovedisplayskip}{4pt}
  \setlength{\belowdisplayskip}{4pt}
\begin{equation}
    \hat{g}_N^{\pi(R)} = \lim_{T \to \infty} \frac{1}{T} \sum_{t=1}^T \mathbb{E}_{x_t} \left[ \frac{1}{N} \sum_{n=1}^N c(w_{n,t}, x_t) \right]
    \label{eqn:g_N_hat-defn}
\end{equation}
}
In the naive model, the reel is selected before observing the component, so the one-step cost is computed by taking the expectation over $x \sim p(x)$ in advance
{
  \setlength{\abovedisplayskip}{4pt}
  \setlength{\belowdisplayskip}{4pt}
\[
\frac{1}{N} \sum_{n=1}^N \sum_{x \in \mathbf{X}} p(x) c(w_{n,t}, x) = \mathbb{E}_{x_t} \left[ \frac{1}{N} \sum_{n=1}^N c(w_{n,t}, x_t) \right].
\]
}
Thus, its long-run average cost is
{
  \setlength{\abovedisplayskip}{4pt}
  \setlength{\belowdisplayskip}{4pt}
\begin{equation}
    g_N^{\pi(R)} = \lim_{T \to \infty} \frac{1}{T} \sum_{t=1}^T \mathbb{E}_{x_t} \left[ \frac{1}{N} \sum_{n=1}^N c(w_{n,t}, x_t) \right].
    \label{eqn:g_N-defn}
\end{equation}
}
Since both formulations evaluate the same per-step cost structure and evolve under the same randomness, their long-run average costs provided in equations \ref{eqn:g_N_hat-defn} and \ref{eqn:g_N-defn} are identical, thus
{
  \setlength{\abovedisplayskip}{4pt}
  \setlength{\belowdisplayskip}{-5pt}
\[
\hat{g}_N^{\pi(R)} = g_N^{\pi(R)}.
\]
}
\end{proof}
\vspace{-0.5\baselineskip}
This result shows that knowledge on the current component to be allocated does not change the long-run average cost under random allocation. It confirms that the random policy $\pi(R)$ leads to the same stationary average cost whether the system operates in an augmented or naive state space. This property allows us to use the simpler naive model to derive bias function structures and informs the design of index policies that improve upon the random allocation baseline.

Although the $N$-reel system involves joint dynamics over multiple interacting reels, under the random policy $\pi(R)$ all reels evolve independently, which allows for a powerful simplification. Under random allocation, each reel is selected independently with uniform probability, and all reels are individually exposed to the same sequence of component weights in expectation. While changes in individual reel weights are not strictly independent each reel evolves under statistically identical conditions. This symmetry motivates the analysis of a single-reel MRP, which tracks the behavior of a single reel in isolation. The key insight is that random allocation over $N$ reels effectively induces $N$ identical and decoupled single-reel MRPs. As a result, the behavior and cost of the full system can be understood through the analysis of just one reel. The following definition formalizes the single-reel model that captures this behavior.

\begin{definition}
\textit{Let $g_1$ and $h_1(w)$ be any solution to the naive single-reel MRP, defined by the Bellman equation:}
{
  \setlength{\abovedisplayskip}{4pt}
  \setlength{\belowdisplayskip}{4pt}
\begin{equation}
    h_{1}(w) = - g_{1} + \sum_{x'} p(x') \Big( c(w,x') +  h_{1}(e(w,x')) \Big) \quad \forall w.
    \label{eq:bellmann_h1}
\end{equation}
}
\end{definition}

In the naive single-reel MRP, the state is given by the current remaining weight $w \in \mathbf{W}$ on the reel. The function $h_1(w)$ represents the bias, while $g_1$ is the corresponding long-run average cost. The bias function $h_1(w)$, derived under random allocation, reflects the relative value of having $w$ grams of filament on a reel and can thus be used to construct index-based assignment policies. It quantifies how favorable or costly it is to be in state $w$ relative to the long-run average behavior. From another perspective, $h_1(w)$ is the expected waste of a reel with weight $w$ assuming that the reel is consumed for components with weight distribution $p(x)$. It is important to note that in this special case the action is deterministic. The state transitions only depend on the current reel weight $w$ and the randomness of the component weight $x' \sim p(x)$. The randomness arises solely from the distribution of component weights. 

Intuitively, this simplification preserves the long-run average cost under random policy. The cost incurred in the $N$-reel system under random assignment is equal to the cost in the single-reel model. The following lemma formalizes this result and shows that the long-run average cost $g_N$ is invariant not only to the number of reels $N$, but also to how many reels are tracked simultaneously.

\begin{lemma}
Suppose $\pi(R)$ is the random policy, with equal action probabilities for all reels. Let $g_N^{\pi(R)}$ denote the long-run average cost associated with $\pi^R$ for the naive $N$-reel system, and $g_1$ denote the long-run average cost for the corresponding single-reel MRP. Then, for any $N \in \mathbb{Z}^+$
{
  \setlength{\abovedisplayskip}{4pt}
  \setlength{\belowdisplayskip}{4pt}
\begin{equation*}
     g_N^{\pi(R)} = g_1.
\end{equation*}
}
\label{lemma:gN_g1}
\end{lemma}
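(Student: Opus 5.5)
The plan is to exhibit an explicit solution of the naive $N$-reel Bellman equation \eqref{eq:bellmann_hN} built from any solution $(g_1,h_1)$ of the single-reel equation \eqref{eq:bellmann_h1}. The natural candidate is the additive, scaled bias
\[
H(w_1,\ldots,w_N) := N \sum_{m=1}^N h_1(w_m),
\]
paired with the gain $G := g_1$. Since the naive $N$-reel chain on the finite space $\mathbf{W}^N$ is a finite Markov reward process, the average-cost optimality (Poisson) equation determines the gain uniquely whenever it admits a solution. So it suffices to check that $(G,H)$ satisfies \eqref{eq:bellmann_hN}; then $g_N^{\pi(R)} = g_1$. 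If the chain has several recurrent classes, the gain may be state dependent, and I would handle that in the final step.

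The key computation is to substitute $H$ into the right-hand side of \eqref{eq:bellmann_hN}. For a fixed selected reel $n$, replacing $w_n$ by $e(w_n,x')$ changes only the $n$-th summand of $H$. This gives
\[
\sum_{x'} p(x')\,H(\ldots,e(w_n,x'),\ldots) = N\sum_{m\neq n} h_1(w_m) + N\sum_{x'}p(x')\,h_1(e(w_n,x')).
\]
By \eqref{eq:bellmann_h1}, the last term equals
\[
N\Big(h_1(w_n) + g_1 - \sum_{x'}p(x')c(w_n,x')\Big),
\]
so the inner bracket for reel $n$ becomes
\[
H(w) + N g_1 - (N-1)\sum_{x'}p(x')c(w_n,x').
\]
The $(N-1)$ leftover cost terms show that the factor $N$ is not quite right. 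With the unscaled candidate $H=\sum_m h_1(w_m)$, the inner bracket for reel $n$ is instead
\[
\sum_{m\neq n} h_1(w_m) + h_1(w_n) + g_1 = H(w) + g_1,
\]
because the $n$-th summand already absorbs its own immediate cost through \eqref{eq:bellmann_h1}.

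Averaging over $n$ with weight $1/N$ then gives $H(w) + g_1$, so \eqref{eq:bellmann_hN} holds with $G = g_1$ and $H = \sum_m h_1(w_m)$. The intuition is that under random allocation each period activates exactly one reel, and that reel behaves like one step of the single-reel chain. I would therefore use the unscaled sum in the final write-up. Each step is a direct algebraic substitution, with the linearity and single-coordinate structure of the update carrying the argument.

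The main obstacle is justifying that a solution of the Bellman equation pins down the long-run average cost defined as the Ces\`aro limit. My approach is to iterate \eqref{eq:bellmann_hN} $T$ times to obtain
\[
\sum_{t=1}^T \mathbb{E}[\text{cost}_t] = T g_1 + H(s_1) - \mathbb{E}[H(s_{T+1})].
\]
Dividing by $T$ and using that $H$ is bounded on the finite space $\mathbf{W}^N$ gives the limit $g_1$ from every initial state. This also removes any concern about multichain structure. The same telescoping identity applied to \eqref{eq:bellmann_h1} confirms that $g_1$ is the single-reel average cost, which completes the argument.
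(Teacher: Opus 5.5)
\textbf{Verdict.} Your proof is correct, and it takes a genuinely different route from the paper: it runs the paper's logic in the opposite direction.

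\textbf{The paper's route.} The paper argues directly with Ces\`aro averages. It writes $g_N^{\pi(R)}$ as the long-run average of $\frac{1}{N}\sum_{n}c(w_{n,t},x_t)$, interchanges the sums, and asserts that each reel's long-run average cost equals $g_1$ because all reels ``evolve identically in distribution.'' The lemma is then used as an input in the proof of Theorem~\ref{thm:h_N+=h_1}, to cancel the gain terms there.

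\textbf{Your route.} You carry out essentially the Theorem~\ref{thm:h_N+=h_1} computation with the gain fixed in advance at $g_1$. This shows that $(g_1,\sum_m h_1(w_m))$ solves \eqref{eq:bellmann_hN}. You then recover $g_N^{\pi(R)}=g_1$ from the telescoping identity together with boundedness of $H$ on the finite space $\mathbf{W}^N$.

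\textbf{Comparison.} The paper's argument gives the more direct intuition, since each reel behaves like a time-changed single-reel process. However, its key step is only sketched. Reel $n$'s marginal process is a lazy copy of the single-reel chain, updated only with probability $1/N$ per period. Equating its long-run average of $\sum_{x'}p(x')c(w,x')$ with $g_1$ requires a stationary-distribution (unichain) argument, and the existence of the per-reel limits, neither of which is given.

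Your route needs only the existence of a scalar-gain solution of \eqref{eq:bellmann_h1}. The paper's definition presupposes this, and it holds because $B-x_{\max}$ is reachable from every $w$, where $x_{\max}$ is the largest weight in the support of $p$. Your telescoping also disposes of the multichain concern automatically. Finally, it yields both this lemma and the additive bias of Theorem~\ref{thm:h_N+=h_1} from a single computation, rather than needing the lemma as an input to that theorem.

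In the write-up, simply drop the exploratory $N$-scaled candidate.
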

\vspace{-1.9\baselineskip}
\begin{proof}
By definition, the long-run average cost for the single-reel MRP is 
{
  \setlength{\abovedisplayskip}{4pt}
  \setlength{\belowdisplayskip}{4pt}
\begin{equation} 
g_1 = \lim_{T \rightarrow \infty} \frac{1}{T} \sum_{t=1}^T \mathbb{E}_{x_t} [c(w_t, x_t)]. 
\label{eq:g_1} 
\end{equation}
}
For the naive $N$-reel system under the random policy, the long-run average cost is
{
  \setlength{\abovedisplayskip}{4pt}
  \setlength{\belowdisplayskip}{4pt}
\[
    g_N^{\pi(R)} = \lim_{T \rightarrow \infty} \frac{1}{T} \sum_{t=1}^T \mathbb{E}_{x_t} \left[ \frac{1}{N} \sum_{n=1}^N c(w_{n,t}, x_t) \right].
\]
}
Since each reel is selected uniformly at random and independently of state, and all reels evolve identically in distribution under $\pi^R$, interchanging summation and expectation, we get:
{
  \setlength{\abovedisplayskip}{4pt}
  \setlength{\belowdisplayskip}{4pt}
\begin{align*}
    g_N^{\pi(R)} &= \frac{1}{N} \sum_{n=1}^N \lim_{T \rightarrow \infty} \frac{1}{T} \sum_{t=1}^T \mathbb{E} [ c(w_{n,t}, x_t) ] \\
    &= \frac{1}{N} \sum_{n=1}^N g_1 = g_1.
\end{align*}
}
This implies $g_N^{\pi(R)} = g_1$ for all $N$.
\end{proof}

Moreover, we have $g_N^{\pi(R)} = g_{N'}^{\pi(R)}$ for any $N, N' \in \mathbb{Z}^+$. This results shows that the long-run average cost of the naive system under the random policy is independent of the number of reels. This lemma also justifies using the single-reel MRP to reason about average cost behavior in larger systems, thus provides a theoretical foundation for index-based policies. 

 We now analyze how the structure of the bias function in the $N$-reel system relates to that of the single-reel system. Specifically, under the random allocation policy, each reel experiences the same stochastic environment and contributes equally to the system's overall cost dynamics. This symmetry suggests that the bias function for the $N$-reel MRP can potentially be represented as an aggregation of single-reel bias functions provided that we consider random allocation policy. The following theorem formalizes this intuition by showing that the bias function $h_N(w_1, \ldots, w_N)$ can be expressed as the sum of the individual reel bias functions $h_1(w_n)$.

\begin{theorem}
Let $h_N^{\pi(R)}$ be the bias function and $g^{\pi(R)}$ be the gain associated with the random policy $\pi(R)$ for the naive $N$-reel MDP. Then for any state $(w_1,\ldots,w_N)$,
{
  \setlength{\abovedisplayskip}{4pt}
  \setlength{\belowdisplayskip}{4pt}
\begin{equation*}
     h_N^{\pi(R)}(w_1,\ldots,w_N) = \sum_{n=1}^N h_1(w_n) 
     \label{eq:h_N+=h_1}
\end{equation*}
}
is a solution to the Bellman equation given in \eqref{eq:bellmann_hN}.
\label{thm:h_N+=h_1}

\end{theorem}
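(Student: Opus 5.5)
We argue by direct substitution: insert the candidate $h_N^{\pi(R)}(w_1,\ldots,w_N)=\sum_{m=1}^N h_1(w_m)$, together with the gain $g_N^{\pi(R)}=g_1$ from Lemma~\ref{lemma:gN_g1}, into the right-hand side of the Bellman equation \eqref{eq:bellmann_hN}. We then show that it collapses to the left-hand side using only the single-reel Bellman equation \eqref{eq:bellmann_h1}. Since $(g_1,h_1)$ is by definition a solution of \eqref{eq:bellmann_h1}, this shows that the pair $(g_1,\sum_m h_1(w_m))$ solves \eqref{eq:bellmann_hN}, which is exactly the claim.

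The computation goes as follows. Fix $n$ and a component weight $x'$. Assigning to reel $n$ changes only coordinate $n$, so
\[
h_N(w_1,\ldots,e(w_n,x'),\ldots,w_N)=\sum_{m\neq n} h_1(w_m)+h_1(e(w_n,x')).
\]
Substituting this, the inner sum $\sum_{x'}p(x')[\,\cdot\,]$ for reel $n$ becomes
\[
\sum_{m\neq n}h_1(w_m)+\sum_{x'}p(x')\bigl(c(w_n,x')+h_1(e(w_n,x'))\bigr).
\]
Here we used $\sum_{x'}p(x')=1$. By \eqref{eq:bellmann_h1}, the second term equals $h_1(w_n)+g_1$. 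The inner sum is therefore $\sum_{m=1}^N h_1(w_m)+g_1$, and this value does not depend on $n$.

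Averaging over $n$ with weights $\tfrac1N$ leaves this quantity unchanged. Subtracting the gain $g_1$ then returns $\sum_m h_1(w_m)$, which is the left-hand side. So the Bellman equation \eqref{eq:bellmann_hN} holds with gain $g_N^{\pi(R)}=g_1$, consistent with Lemma~\ref{lemma:gN_g1}.

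The only delicate point is bookkeeping rather than analysis. The gain subtracted in \eqref{eq:bellmann_hN} appears once for the whole system, whereas each reel's single-reel equation would contribute its own $g_1$. The argument works because only the selected reel contributes a $g_1$ in a given step, and the $\tfrac1N$ averaging over which reel is selected keeps exactly one copy. We should state this explicitly, so that it is not mistaken for an $Ng_1$ term. We would also note that bias functions are unique only up to an additive constant, so the statement asserts that this sum is \emph{a} solution, not the unique one.
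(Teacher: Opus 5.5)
Your proof is correct and follows the same route as the paper: you substitute $\sum_m h_1(w_m)$ into \eqref{eq:bellmann_hN}, collapse the result using the single-reel Bellman equation \eqref{eq:bellmann_h1}, and cancel the gain via $g_N^{\pi(R)}=g_1$ from Lemma~\ref{lemma:gN_g1}. Your per-reel bookkeeping, which shows the inner expectation equals $\sum_m h_1(w_m)+g_1$ for every $n$, is a cleaner arrangement of the same cancellation the paper gets by summing the single-reel equations over $n$ and dividing by $N$.
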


\begin{proof}

    Let $(w_1,\ldots,w_N)$ be an arbitrary state of the naive $N$-reel MRP and $(w_1, \ldots, e(w_n, x'), \ldots, w_N)$ be the state at the next step for an arbitrarily chosen reel $n\in{1, ...,N}$. We want to verify that 
    {
  \setlength{\abovedisplayskip}{4pt}
  \setlength{\belowdisplayskip}{4pt}
    \begin{align}
        h_N^{\pi(R)}(w_1,\ldots,w_N) = \sum_{n=1}^N h_1(w_n) 
        \label{eq:thm_eq1}
    \end{align}
    }
    and
    {
  \setlength{\abovedisplayskip}{4pt}
  \setlength{\belowdisplayskip}{4pt}
    \begin{align}
        h_N^{\pi(R)}(w_1, \ldots, e(w_n, x'), \ldots, w_N) = h_1(e(w_n, x')) + \sum_{n=1}^N h_1(w_n) - h_1(w_n) \quad \forall n
        \label{eq:thm_eq2}
    \end{align}
    }
    satisfy the Bellman equation of $h_N^{\pi(R)}$ that is given by
    {
  \setlength{\abovedisplayskip}{4pt}
  \setlength{\belowdisplayskip}{4pt}
    \begin{equation*}
    h_{N}^{\pi(R)}(w_1,\ldots,w_N) + g_{N}^{\pi(R)} - \frac{1}{N} \sum_{n=1}^N \sum_{x'} p(x') \Big( c(w_n,x') + h_{N}^{\pi(R)}(w_1,\ldots, e(w_n,x'),\ldots, w_N) \Big) = 0.
\end{equation*}
}
    Showing that the following expression is zero for any $w_n$ will complete the proof:
    {
  \setlength{\abovedisplayskip}{4pt}
  \setlength{\belowdisplayskip}{4pt}
    \begin{align}
        \sum_{n=1}^N h_1(w_n)  + g_N^{\pi(R)} - \frac{1}{N}\sum_{n=1}^N \sum_{x'} p(x') \Big[ c(w_n,x') + h_1(e(w_n, x')) + \sum_{n=1}^N h_1(w_n) - h_1(w_n) \Big].
        \label{expression:h_N}
    \end{align}
}

    First, to get an alternative representation of $\sum_{n=1}^N h_1(w_n)$ which occurs on the right hand side of the equation \eqref{eq:thm_eq1}, we use Bellman equation for $h_1(w_n)$ given by \ref{eq:bellmann_h1}:
    {
  \setlength{\abovedisplayskip}{4pt}
  \setlength{\belowdisplayskip}{4pt}
    \begin{align*} 
    \sum_{n=1}^N h_1(w_n) &= \sum_{n=1}^N \left( -g_1 + \sum_{x'} p(x') \left( c(w_n, x') + h_1(e(w_n, x')) \right) \right) \\ &= -N g_1 + \sum_{n=1}^N \sum_{x'} p(x') \left( c(w_n, x') + h_1(e(w_n, x')) \right). \end{align*}
}
    Dividing both sides by $N$ to get:
    {
  \setlength{\abovedisplayskip}{4pt}
  \setlength{\belowdisplayskip}{4pt}
    \[
\frac{1}{N} \sum_{n=1}^N h_1(w_n) = - g_1  + \frac{1}{N} \sum_{n=1}^N \sum_{x'} p(x') \Big[ c(w_n,x') +  h_{1}(e(w_n,x')) \Big]
\]
}
    Adding $\sum_{n=1}^N h_1(w_n) - \frac{1}{N} \sum_{n=1}^N h_1(w_n)$ to both sides of the equation, we get:
    {
  \setlength{\abovedisplayskip}{4pt}
  \setlength{\belowdisplayskip}{4pt}
    \begin{align}
         \sum_{n=1}^N h_1(w_n) & = - g_1  + \frac{1}{N}\sum_{n=1}^N \sum_{x'} p(x') \Big[ c(w_n,x') +  h_{1}(e(w_n,x')) \Big] + \sum_{n=1}^N h_1(w_n) - \frac{1}{N}\sum_{n=1}^N h_1(w_n) .
         \label{eq:thm_eq_1alt}
    \end{align}
}
    Equation \ref{eq:thm_eq_1alt} provides an alternative representation to $\sum_{n=1}^N h_1(w_n)$, which we can substitute for the first term in expression \eqref{expression:h_N}. Rewriting this expression then canceling out the long-run average cost terms by Lemma \ref{lemma:gN_g1} and using $\sum_{x'} p(x') = 1$ we get
{
  \setlength{\abovedisplayskip}{4pt}
  \setlength{\belowdisplayskip}{4pt}
\begin{align*}
        & - g_1  + \frac{1}{N}\sum_{n=1}^N \sum_{x'} p(x') \Big[ c(w_n,x') +  h_{1}(e(w_n,x')) \Big] + \sum_{n=1}^N h_1(w_n) - \frac{1}{N}\sum_{n=1}^N h_1(w_n)  \\ & + g_N^{\pi(R)} - \frac{1}{N}\sum_{n=1}^N \sum_{x'} p(x') \Big[ c(w_n,x') + h_1(e(w_n, x')) + \sum_{n=1}^N h_1(w_n) - h_1(w_n) \Big] =  0.
    \end{align*}
}
Since this expression is $0$, $\sum_{n=1}^N h_1(w_n)$ satisfies the Bellman equation for $h_N^{\pi(R)}$, with $g_N^{\pi(R)} = g_1$.
\end{proof}

We have shown that the bias function $h_N^{\pi(R)}$ of the naive $N$-reel system can be decomposed into the sum of single-reel bias functions $h_1$ under the random policy. This result demonstrates that the complexity of the $N$-reel system under random allocation can be significantly reduced by analyzing the behavior of a single reel. The additive structure of $h_N^{\pi(R)}$ highlights the distinct and independent contribution of each reel to the overall value function $h_N^{\pi(R)}$. This is evident even when taking into account the coupling that arises from the randomness inherent in the selection process. This structural property is particularly valuable for designing index policies, as it enables the evaluation of the marginal benefit of assigning a component to a particular reel based only on its individual state. 

Now, we will focus on the augmented $N$-reel system which directly corresponds to the MDP described in Section \ref{sec:MDPmodel}. In this setting, the current component weight $x$ is known at the time of decision-making. We claim that the bias function $\hat{h}_N^{\pi(R)}$ of the augmented system can also be explicitly represented in terms of $h_N^{\pi(R)}$ and immediate costs.

\begin{theorem}
For any $\theta \in \mathbb{R}$, 
{
  \setlength{\abovedisplayskip}{4pt}
  \setlength{\belowdisplayskip}{4pt}
\begin{equation}
     \hat{h}_N^{\pi(R)}(w_1,\ldots,w_N, x) = \frac{1}{N} \sum_{n=1}^N \Big( c(w_n,x) + h_N^{\pi(R)}(w_1,\ldots, e(w_n,x),\ldots, w_N)  \Big) + \theta
     \label{eq:h_Nhat+=h_1}
\end{equation}
}
is a solution to the Bellman equation given in \eqref{eq:bellmann_hNhat}.

\label{thm:h_Nhat+=}
\end{theorem}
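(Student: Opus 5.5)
Plan: substitute the candidate \eqref{eq:h_Nhat+=h_1} into the right-hand side of \eqref{eq:bellmann_hNhat}, with $\hat g_N^{\pi(R)}=g_N^{\pi(R)}$ from Lemma~\ref{lemma:gN_gNhat}, and reduce the inner expectation to the naive Bellman equation \eqref{eq:bellmann_hN}. The idea is that $\hat h_N$ is ``one step of the naive recursion taken after $x$ is revealed'': the augmented state $(w,x)$ sits half a step ahead of the naive state $w$.

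\textbf{Step 1.} Write $w=(w_1,\ldots,w_N)$ and $w^{n,x}=(w_1,\ldots,e(w_n,x),\ldots,w_N)$. The candidate reads
\[
\hat h(w,x)=\frac1N\sum_{n}\bigl(c(w_n,x)+h_N^{\pi(R)}(w^{n,x})\bigr)+\theta .
\]
Average the candidate over a fresh component $x'\sim p$ at an arbitrary reel vector $v$:
\[
\sum_{x'}p(x')\hat h(v,x')=\frac1N\sum_n\sum_{x'}p(x')\bigl(c(v_n,x')+h_N^{\pi(R)}(v^{n,x'})\bigr)+\theta .
\]
By \eqref{eq:bellmann_hN}, the double sum equals $h_N^{\pi(R)}(v)+g_N^{\pi(R)}$. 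Hence
\[
\sum_{x'}p(x')\hat h(v,x')=h_N^{\pi(R)}(v)+g_N^{\pi(R)}+\theta .
\]
This identity is the key lemma.

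\textbf{Step 2.} Apply Step 1 with $v=w^{n,x}$ for each $n$ on the right-hand side of \eqref{eq:bellmann_hNhat}. This gives
\[
-\hat g_N^{\pi(R)}+\frac1N\sum_n\Bigl[c(w_n,x)+h_N^{\pi(R)}(w^{n,x})+g_N^{\pi(R)}+\theta\Bigr].
\]
Since $\hat g_N^{\pi(R)}=g_N^{\pi(R)}$ by Lemma~\ref{lemma:gN_gNhat}, the gain terms cancel. What remains is $\frac1N\sum_n\bigl(c(w_n,x)+h_N^{\pi(R)}(w^{n,x})\bigr)+\theta$, which is exactly the candidate $\hat h(w,x)$. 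Because $\theta$ is an arbitrary additive constant and passes through both sides, the statement holds for every $\theta\in\mathbb R$.

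\textbf{Main obstacle.} The computation itself is short. The delicate part is justifying that $h_N^{\pi(R)}$ with gain $g_N^{\pi(R)}$ really solves \eqref{eq:bellmann_hN}, and that the gain appearing in \eqref{eq:bellmann_hNhat} is the same constant. For the first point I would take $h_N^{\pi(R)}=\sum_n h_1(w_n)$ from Theorem~\ref{thm:h_N+=h_1}, with $g_N^{\pi(R)}=g_1$ by Lemma~\ref{lemma:gN_g1}. The second point is exactly Lemma~\ref{lemma:gN_gNhat}. I would also note that the bracket in \eqref{eq:bellmann_hNhat} must be read with $\hat h_N$ evaluated at the post-assignment reel vector. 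Step 1 is then applied pointwise at each such vector, which is legitimate because it holds for every $v\in\mathbf W^N$.
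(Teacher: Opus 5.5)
Your proof is correct and follows essentially the same route as the paper. Both substitute the candidate into \eqref{eq:bellmann_hNhat}, use the naive Bellman equation \eqref{eq:bellmann_hN} at each post-assignment reel vector to collapse the inner average to $h_N^{\pi(R)}+g_N^{\pi(R)}+\theta$, and then cancel the gains via Lemma~\ref{lemma:gN_gNhat}. Your Step~1 identity is just a cleaner packaging of the paper's inline double-sum replacement, and appealing to Theorem~\ref{thm:h_N+=h_1} is not needed, since any solution of \eqref{eq:bellmann_hN} suffices.
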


\begin{proof}

Let $\theta \in \mathbb{R}$ be an arbitrary constant. We substitute the proposed form of $\hat{h}_N^{\pi(R)}(w_1, \ldots, w_N, x)$ into the left hand side of the following Bellman equation of $\hat{h}_N^{\pi(R)}$:
{
  \setlength{\abovedisplayskip}{4pt}
  \setlength{\belowdisplayskip}{4pt}
\begin{equation*}
    \hat{h}_{N}^{\pi(R)}(w_1,\ldots,w_N,x) + \hat{g}_N^{\pi(R)} - \frac{1}{N}  \sum_{n=1}^N \Big[ c(w_n,x) + \sum_{x'} p(x')  \hat{h}_N^{\pi(R)}(w_1,\ldots, e(w_n,x),\ldots, w_N,x')  \Big] = 0
\end{equation*}
}
and get the expression
{
  \setlength{\abovedisplayskip}{4pt}
  \setlength{\belowdisplayskip}{4pt}
\begin{multline}
 \frac{1}{N} \sum_{n=1}^N \Big( c(w_n,x) + h_N^{\pi(R)}(w_1,\ldots, e(w_n,x), \ldots, w_N)  \Big)  + \theta  + \hat{g}_{N}^{\pi(R)} \\ 
- \frac{1}{N} \sum_{n=1}^N \left[ c(w_n, x) + \sum_{x'} p(x') \left( \frac{1}{N} \sum_{m=1}^N \left( c(w_m', x') + h_N^{\pi(R)}(w_1', \ldots, e(w_m', x'), \ldots, w_N') \right) + \theta \right) \right]
\label{expression:h_hat_N}
\end{multline}
}

where $w_m' = w_n$ for $m \neq n$, and $w_m' = e(w_n, x)$ for $m = n$. Showing that this expression is zero will complete the proof.

Simplifying the expression we get:
{
  \setlength{\abovedisplayskip}{4pt}
  \setlength{\belowdisplayskip}{4pt}
\begin{multline}
           \frac{1}{N} \sum_{n=1}^N  h_N^{\pi(R)}(w_1,\ldots, e(w_n,x),\ldots, w_N)  + \theta + \hat{g}_{N}^{\pi(R)} \\ 
           - \frac{1}{N^2} \sum_{n=1}^N \sum_{m=1}^N \sum_{x'} p(x')  \left( c(w_m', x') + h_N^{\pi(R)}(w_1', \ldots, e(w_m', x'), \ldots, w_N') \right) - \theta.
          \label{expression:h_hat_N_2}
    \end{multline}
}

Since $h_N$ satisfies its Bellman equation we have
{
  \setlength{\abovedisplayskip}{4pt}
  \setlength{\belowdisplayskip}{4pt}
\begin{align*}
h_N^{\pi(R)}(w_1, \ldots, e(w_n, x), \ldots, w_N) = -g_N^{\pi(R)} + \frac{1}{N} \sum_{m=1}^N \sum_{x'} p(x') \left( c(w_m', x') + h_N^{\pi(R)}(w_1', \ldots, e(w_m', x'), \ldots, w_N') \right).
\end{align*}
}
Using this we can replace the double inner sum in expression \eqref{expression:h_hat_N_2} by $h_N^{\pi(R)}(w_1, \ldots, e(w_n, x), \ldots, w_N) + g_N^{\pi(R)}$ to get
{
  \setlength{\abovedisplayskip}{4pt}
  \setlength{\belowdisplayskip}{4pt}
    \begin{align*}
          \frac{1}{N} \sum_{n=1}^N  h_N^{\pi(R)}(w_1,\ldots, e(w_n,x),\ldots, w_N)  + \theta + \hat{g}_{N}^{\pi(R)} 
           - \frac{1}{N} \sum_{n=1}^N \left( h_N^{\pi(R)}(w_1, \ldots, e(w_n, x), \ldots, w_N) + g_N^{\pi(R)} \right) - \theta = 0.
    \end{align*}
}
As $\hat{g}_N^{\pi(R)} = g_N^{\pi(R)} $ by Lemma \ref{lemma:gN_gNhat}, this expression is zero. This confirms that $\hat{h}_N^{\pi(R)}$ as defined satisfies the Bellman equation for any $\theta$, completing the proof.
\end{proof}

By Theorem \ref{thm:h_Nhat+=}, we have established an explicit relationship between the bias functions $\hat{h}_N$ and $h_N$, corresponding to the augmented and $N$-reel MRPs, respectively. Building on this result, we can now directly express $\hat{h}_N$ in terms of the single-reel bias function $h_1$. The following corollary formalizes this connection.

\begin{corollary}
For any $\theta \in \mathbb{R}$, 
{
  \setlength{\abovedisplayskip}{4pt}
  \setlength{\belowdisplayskip}{4pt}
\begin{equation}
     \hat{h}_N^{\pi(R)}(w_1,\ldots,w_N, x) = \frac{1}{N} \sum_{n=1}^N \Big( c(w_n,x) + h_1(e(w_n,x)) + (N-1) h_1(w_n)  \Big) + \theta
     \label{eq:h_Nhat+=h_1_corollary}
\end{equation}
}
is a solution to the Bellman equation given in \eqref{eq:bellmann_hNhat}. 

\label{cor:h_Nhat+=h_1_corollary}
\end{corollary}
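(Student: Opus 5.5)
The plan is to combine Theorem~\ref{thm:h_Nhat+=} with Theorem~\ref{thm:h_N+=h_1}. Theorem~\ref{thm:h_Nhat+=} shows that, for any $\theta$, the function
\[
\frac{1}{N}\sum_{n=1}^N\Big(c(w_n,x)+h_N^{\pi(R)}(w_1,\ldots,e(w_n,x),\ldots,w_N)\Big)+\theta
\]
solves \eqref{eq:bellmann_hNhat}. This holds whenever $h_N^{\pi(R)}$ is \emph{some} solution of \eqref{eq:bellmann_hN} with gain $g_N^{\pi(R)}=\hat g_N^{\pi(R)}$; the proof of that theorem used only the Bellman equation for $h_N^{\pi(R)}$ and Lemma~\ref{lemma:gN_gNhat}. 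I will therefore first point out that the argument goes through unchanged for any solution pair $(h_N^{\pi(R)},g_N^{\pi(R)})$ of \eqref{eq:bellmann_hN}.

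Next, I will take the particular solution $h_N^{\pi(R)}(w_1,\ldots,w_N)=\sum_{m=1}^N h_1(w_m)$ supplied by Theorem~\ref{thm:h_N+=h_1}, whose gain is $g_1$, and $g_1=g_N^{\pi(R)}$ by Lemma~\ref{lemma:gN_g1}. Evaluating this at the post-assignment state, where only coordinate $n$ changes, gives
\[
h_N^{\pi(R)}(w_1,\ldots,e(w_n,x),\ldots,w_N)=h_1(e(w_n,x))+\sum_{m=1}^N h_1(w_m)-h_1(w_n).
\]

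Substituting this into the expression above gives
\[
\frac{1}{N}\sum_{n=1}^N\Big(c(w_n,x)+h_1(e(w_n,x))-h_1(w_n)\Big)+\sum_{m=1}^N h_1(w_m)+\theta.
\]
The last step is to redistribute the constant term $\sum_m h_1(w_m)$. Writing it as $\frac{1}{N}\sum_{n=1}^N N h_1(w_n)$ and combining with $-h_1(w_n)$ inside the average gives the coefficient $(N-1)h_1(w_n)$, which is exactly \eqref{eq:h_Nhat+=h_1_corollary}.

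I expect no real obstacle. The only point that needs care is the legitimacy of plugging a specific solution of \eqref{eq:bellmann_hN} into Theorem~\ref{thm:h_Nhat+=}, i.e., checking that the gains match so that the cancellation in that theorem's proof still holds. Lemmas~\ref{lemma:gN_gNhat} and~\ref{lemma:gN_g1} give $\hat g_N^{\pi(R)}=g_N^{\pi(R)}=g_1$, which settles this. Beyond that, the proof is just the index bookkeeping in the final rearrangement.
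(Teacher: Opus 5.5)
Your proposal is correct and follows the paper's own route. The paper's proof consists only of the remark that the corollary follows from Theorem~\ref{thm:h_N+=h_1} and Theorem~\ref{thm:h_Nhat+=}, and you carry out that substitution explicitly. You also make explicit the points the paper leaves implicit: Theorem~\ref{thm:h_Nhat+=} applies to any solution of \eqref{eq:bellmann_hN} once the gains agree ($\hat g_N^{\pi(R)}=g_N^{\pi(R)}=g_1$), and writing $\sum_m h_1(w_m)=\frac{1}{N}\sum_n N h_1(w_n)$ produces the coefficient $(N-1)h_1(w_n)$.
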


\begin{proof}
    The proof directly follows from Theorem \ref{thm:h_N+=h_1} and Theorem \ref{thm:h_Nhat+=}.
\end{proof}

This corollary demonstrates that the augmented bias function under the random policy $\hat{h}_N^{\pi(R)}$ can be entirely described in terms of individual reel bias functions $h_1$, along with immediate costs. This decomposition is critical for developing practical index policies, as it supports localized decision-making based on per-reel states while ensuring alignment with system-wide performance.

\subsection{Index Policy}
\label{sec:IndexPolicy}

A particularly effective class of approximate decision rules for complex resource allocation problems are index-based heuristics, which assign each item to a resource by evaluating a real-valued index function that captures the priority or desirability of each option given the current state. The core idea is to measure how each decision impacts the state's future value or loss and to choose the most favorable option accordingly. While the approach is heuristic, it is conceptually related to Gittins-style index policies in that it decomposes the global problem into per-unit subproblems to assign interpretable priorities. Our proposed index policy follows this principle by constructing per-reel indices derived from a simplified, single-reel model under random allocation. However, classical Gittins indices arise in optimal stopping problems with discounted rewards, whereas our setting focuses on minimizing expected waste per assignment without discounting or continuation decisions. As a result, the index we derive serves as a computationally efficient and interpretable approximation, tailored to the structure of the reel assignment problem.

A notable example is the differential index policy, introduced in the context of bin covering and stochastic packing problems \citep{Asgeirsson2014, Peeters2022}. These policies typically define a scalar loss function over the current load or capacity of a bin, and select the option that maximizes the expected reduction in this loss. In contrast to existing applications of index policies, in filament based 3D printing, reels are never initialized with full weight since they are used immediately after replacement. This operational distinction requires a new index policy formulation that leverages structural insights specific to the problem. Furthermore, unlike previous studies, we provide theoretical performance guarantees for the resulting index policy and explicitly quantify its improvement over the random policy, thereby strengthening its practical relevance for real-world deployment.

Motivated by this reasoning, we now define a well-founded value-function-based index policy $\pi(I)$ for our augmented $N$-reel problem. The index policy is derived from the single-reel naive MRP and incorporates long-run costs. We will use the bias function of the single-reel model, $h_1(\cdot)$, which captures the relative expected cost starting from a given reel weight. Given the current reel weights $(w_1, \ldots, w_N)$ and an incoming component of weight $x$, the index policy selects the reel that minimizes the immediate cost plus the marginal increase in long-term expected waste. Benefiting from $h_1(\cdot)$ defined on the single-reel state space $W$, the reel selected by the index policy is computed by
{
  \setlength{\abovedisplayskip}{4pt}
  \setlength{\belowdisplayskip}{4pt}
  \begin{equation}
    n^*(w_1,\ldots,w_N, x)
    = \argminA_{n \in \mathcal{N}} \Bigl\{
        \Bigl(h_1\bigl(c(w_n,x) + e(w_n,x)\bigr)\Bigr) - h_1(w_n)
      \Bigr\}
    \label{eq:n(s)}
  \end{equation}
}

This policy represents a one-step policy improvement over the random policy baseline and can be viewed as a greedy strategy with respect to the value function $h_1$. It balances immediate filament waste and future expected costs, using a decomposition that is theoretically grounded, tractable and computationally efficient. The policy summarized in Algorithm 1 is directly aligned with the MDP modeling introduced in \S\ref{sec:MDPmodel}.

\textbf{Algorithm 1} The Index Policy for the MDP formulation of the N-bounded online cutting stock problem consists of the following steps. 

Recursively calculate indexes $h_1(w)$ for all possible $w$ values using equation \eqref{eq:bellmann_h1}, then 

\begin{itemize}
    \item Step 1: A request for a component that weights $x$ arrives.
    \vspace{-.2 \baselineskip}
    \item Step 2: Determine the policy optimal reel by $n^*(w_1,...,w_N, x)$ using equation \eqref{eq:n(s)}.
    \vspace{-.2\baselineskip}
    \item Step 3: Assign the component to reel $n^*(w_1,...,w_N, x)$. Update the reel weight by $e(w,x)$ using equation \eqref{eq:e(w,x)}. Return to step 1.
\end{itemize}

In contrast to existing work, we want to formalize the connection between the random policy and the index policy, and show how the latter can be interpreted as a structured policy improvement derived from the bias function decomposition. To obtain a link between the reward of two policies to ultimately show that the index policy performs better compared to the random policy. To facilitate the presentation of the results, we introduce the following function for the augmented N-reel system, where each state is of the form $s = (w_1, \ldots, w_N, x)$. Let $g \in \mathbb{R}$ be a scalar, and $h \in \mathbb{R}^{|\mathcal{S}|}$ be the bias vector defined over all states. For a given policy $\pi$, we define the Bellman error operator:
{
  \setlength{\abovedisplayskip}{4pt}
  \setlength{\belowdisplayskip}{4pt}
\begin{equation} 
B_{\pi}(g, h) = c_{\pi} - g e + (P_{\pi} - I)h, 
\label{eqn:B(g,h)} 
\end{equation}
}

where $c_{\pi} \in \mathbb{R}^{|\mathcal{S}|}$ is the expected immediate cost vector under policy $\pi$, $P_{\pi} \in \mathbb{R}^{|\mathcal{S}| \times |\mathcal{S}|}$ is the transition matrix induced by $\pi$, $I$ is the identity matrix and $e$ is the vector with all entries equal to 1 in appropriate dimension. For the random policy $\pi(R)$, the pair $(\hat{g}_N^{\pi(R)}, \hat{h}_N^{\pi(R)})$ satisfies the Bellman equation for the augmented MRP, and hence:
{
  \setlength{\abovedisplayskip}{4pt}
  \setlength{\belowdisplayskip}{4pt}
\begin{equation*} 
B_{\pi(R)}(\hat{g}_N^{\pi(R)}, \hat{h}_N^{\pi(R)}) = 0. 
\end{equation*}
}
 We will use some results for the limiting matrix from Appendix A.4 \citet{Puterman2014} which will be useful in linking the random policy and the index policy. Let $ P_{\pi}^*$ denote the limiting matrix associated with $P_{\pi}$, defined by 
{
  \setlength{\abovedisplayskip}{4pt}
  \setlength{\belowdisplayskip}{4pt}
 \begin{equation*} 
 P_{\pi}^* = \lim_{N \to \infty} \frac{1}{N} \sum_{t=1}^N P_{\pi}^{t-1}
\end{equation*}
}
 Because the limiting transition probability from a state to another does not depend on the initial state, $P_{\pi}^*$ satisfies $g^{\pi}e = P_{\pi}^*c_{\pi}$. It also satisfies $P_{\pi}^* P_{\pi} = P_{\pi}^*$ and $P_{\pi}^*e = e$. Using these properties, we establish the following lemma:
\begin{lemma}
    Let $\pi$ be a policy with gain $g^{\pi}$ and bias $ h^{\pi}$. Let $\pi'$ be another policy with gain $g^{\pi'}$ and $P_{\pi'}^*$ be the limiting matrix associated with $P_{\pi'}$. Then the difference in average cost between two policies $\pi$ and $\pi'$ satisfies
    {
  \setlength{\abovedisplayskip}{4pt}
  \setlength{\belowdisplayskip}{4pt}
\begin{equation*}
    g^{\pi'}e - g^{\pi}e = P_{\pi'}^* B_{\pi'}(g^{\pi}, h^{\pi}),
\end{equation*}
}
where $P_{\pi'}^*$ is the limiting matrix of the Markov chain under policy $\pi'$. 
\label{lemma:limiting}
\end{lemma}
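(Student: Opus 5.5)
The plan is to expand the definition \eqref{eqn:B(g,h)} of the Bellman error operator for $\pi'$ at the pair $(g^{\pi},h^{\pi})$, multiply on the left by $P_{\pi'}^*$, and simplify using the three limiting-matrix identities quoted just before the statement: $P_{\pi'}^*c_{\pi'} = g^{\pi'}e$, $P_{\pi'}^*P_{\pi'} = P_{\pi'}^*$ and $P_{\pi'}^*e = e$. No property of the policy $\pi$ beyond the fact that $(g^\pi,h^\pi)$ is a pair of a scalar and a vector will be needed for the identity itself. The Bellman equation $B_{\pi}(g^\pi,h^\pi)=0$ only matters later, when the lemma is applied with $\pi=\pi(R)$ and $\pi'=\pi(I)$.

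Concretely, I would write
\begin{equation*}
P_{\pi'}^* B_{\pi'}(g^{\pi}, h^{\pi}) = P_{\pi'}^* c_{\pi'} - g^{\pi} P_{\pi'}^* e + \left(P_{\pi'}^* P_{\pi'} - P_{\pi'}^*\right) h^{\pi}.
\end{equation*}
The first term equals $g^{\pi'}e$. The second equals $g^{\pi}e$ because $P_{\pi'}^*$ is stochastic with rows summing to one. The third vanishes since $P_{\pi'}^*P_{\pi'}=P_{\pi'}^*$. This gives $g^{\pi'}e - g^{\pi}e$, as claimed.

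The only step needing care is the identity $P_{\pi'}^*c_{\pi'} = g^{\pi'}e$, i.e.\ that the gain of $\pi'$ is a constant vector. In general $P^*_{\pi'}c_{\pi'}$ is the gain vector, and it is constant only when the chain induced by $\pi'$ is unichain. The paper asserts this by saying the limiting probabilities do not depend on the initial state. I would state it explicitly as an assumption on $\pi'$: a single recurrent class, possibly with transient states. It is plausible here because a fresh reel weight $B-x$ is reachable from every state, but I would not prove it in this lemma. The identities $P^*P=P^*$ and $P^*e=e$ hold for any finite chain by the Cesàro-limit argument in Appendix A.4 of \citet{Puterman2014}, so nothing further is required. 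The lemma's hypotheses are thus minimal, and the real work comes later, in showing that $B_{\pi(I)}(\hat g_N^{\pi(R)},\hat h_N^{\pi(R)})\le 0$ componentwise using Corollary~\ref{cor:h_Nhat+=h_1_corollary}.
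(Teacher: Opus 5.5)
Your proof is correct and is the paper's argument run in the opposite direction: the paper starts from $g^{\pi'}e = P^*_{\pi'}c_{\pi'}$, adds and subtracts $g^{\pi}e$, and uses $P^*_{\pi'}(P_{\pi'}-I)=0$ and $P^*_{\pi'}e=e$, while you expand $P^*_{\pi'}B_{\pi'}(g^{\pi},h^{\pi})$ and collapse it with the same three identities. Your explicit unichain caveat for $P^*_{\pi'}c_{\pi'}=g^{\pi'}e$ is the assumption the paper makes implicitly when it says the limiting probabilities do not depend on the initial state (strictly, unichain is sufficient rather than necessary for a constant gain, but this does not affect the argument).
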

\begin{proof}
    We add and subtract $g^{\pi}e$ at the right hand side of $g^{\pi'}e = P{\pi'}^*c_{\pi'}$. Using $P_{\pi'}^*(P_{\pi'}-I) = 0$ and $P_{\pi'}^*e=e$ we get 
    {
  \setlength{\abovedisplayskip}{4pt}
  \setlength{\belowdisplayskip}{4pt}
    \begin{equation*}
    g^{\pi'}e = g^{\pi}e + P_{\pi'}^* (c_{\pi'} - g^{\pi}e + (P_{\pi'}-I)h_{\pi} ).
\end{equation*}
}
    The result can be obtained using \eqref{eqn:B(g,h)}.
\end{proof}

\begin{theorem}
    Consider the augmented $N$-reel system where the states are of the form $s = (w_1, \dots w_N,x)$. Index policy $\pi(I)$ defined in Equation \ref{eq:n(s)} yields less waste compared to the random policy $\pi(R)$. Moreover, it is obtained by one step policy iteration on the random policy.
    
\end{theorem}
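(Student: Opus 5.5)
Apply Lemma \ref{lemma:limiting} with $\pi=\pi(R)$ and $\pi'=\pi(I)$. It gives $\hat g^{\pi(I)}e-\hat g_N^{\pi(R)}e=P^*_{\pi(I)}B_{\pi(I)}(\hat g_N^{\pi(R)},\hat h_N^{\pi(R)})$. Since $P^*_{\pi(I)}$ is a nonnegative (stochastic) matrix, it suffices to show two things. First, that $\pi(I)$ is exactly the greedy policy with respect to $\hat h_N^{\pi(R)}$. Second, that greediness makes every entry of the Bellman error vector nonpositive. Both claims rest on the explicit form of $\hat h_N^{\pi(R)}$ from Corollary \ref{cor:h_Nhat+=h_1_corollary}, where we take $\theta=0$, together with Theorem \ref{thm:h_N+=h_1}.

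For the first step, fix $s=(w_1,\ldots,w_N,x)$ and any deterministic action $a$. The Q-value with respect to the random-policy bias is
\[
Q(s,a)=c(w_a,x)+\sum_{x'}p(x')\,\hat h_N^{\pi(R)}(w_1,\ldots,e(w_a,x),\ldots,w_N,x').
\]
By Theorem \ref{thm:h_Nhat+=} and Theorem \ref{thm:h_N+=h_1}, the inner expectation equals $h_N^{\pi(R)}(\ldots,e(w_a,x),\ldots)+g_N^{\pi(R)}$. This is just the Bellman equation of $h_N^{\pi(R)}$ read through the representation of $\hat h_N^{\pi(R)}$. Hence
\[
Q(s,a)=c(w_a,x)+h_1(e(w_a,x))-h_1(w_a)+\sum_{n}h_1(w_n)+g_N^{\pi(R)}.
\]
The last two terms do not depend on $a$, so $\arg\min_a Q(s,a)=\arg\min_a\{c(w_a,x)+h_1(e(w_a,x))-h_1(w_a)\}$. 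To match the index in \eqref{eq:n(s)}, we must relate $c(w_a,x)+h_1(e(w_a,x))$ to $h_1(c(w_a,x)+e(w_a,x))$. I expect this to be the delicate point.

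When $w_a\ge x$ we have $c=0$, so the two expressions coincide. When $w_a<x$, the index argument is $w_a+B-x$. To make the identification I would try to show from \eqref{eq:bellmann_h1} that $h_1$ satisfies $h_1(w+y)=w+h_1(y)$ in the relevant range, reading the index as ``waste plus continuation''. If that identity fails, I would instead interpret \eqref{eq:n(s)} as shorthand for $c+h_1(e)-h_1(w)$, which is the form the decomposition naturally produces.

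With greediness established, the Bellman error of the random policy vanishes: $c_{\pi(R)}-\hat g e+(P_{\pi(R)}-I)\hat h=0$. In each state the random policy's row is the uniform average of $Q(s,a)-\hat g-\hat h(s)$ over $a$, while the index policy picks $\min_a Q(s,a)$. Therefore
\[
B_{\pi(I)}(\hat g_N^{\pi(R)},\hat h_N^{\pi(R)})(s)=\min_aQ(s,a)-\frac1N\sum_aQ(s,a)\le 0
\]
componentwise. Multiplying by $P^*_{\pi(I)}\ge 0$ gives $\hat g^{\pi(I)}\le\hat g_N^{\pi(R)}$. Lemma \ref{lemma:limiting} uses $P^*_{\pi'}$ applied to $c_{\pi'}$ to give the gain, which needs a unichain (or constant-gain) property for $\pi(I)$. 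I would note that every state communicates with states where reels are replaced, or else state the comparison for each recurrent class. The inequality is strict whenever some state with $\min_a Q<\text{mean}_a Q$ is positively recurrent under $\pi(I)$, which is the quantified improvement. Since $\pi(I)$ is greedy with respect to the evaluated bias of $\pi(R)$, it is by definition one step of policy iteration started from $\pi(R)$.
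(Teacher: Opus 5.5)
Your proposal is correct and follows the paper's proof. You apply Lemma~\ref{lemma:limiting} with $\pi=\pi(R)$, $\pi'=\pi(I)$, then use Corollary~\ref{cor:h_Nhat+=h_1_corollary} to reduce the Bellman error to $B_{\pi(I)}(\hat g_N^{\pi(R)},\hat h_N^{\pi(R)})(s)=\Delta_{n^*}-\frac{1}{N}\sum_{n=1}^N\Delta_n\le 0$, where $\Delta_n=c(w_n,x)+h_1(e(w_n,x))-h_1(w_n)$. Your $Q$-value (``min minus mean'') argument reaches this more cleanly than the paper's term-by-term substitution, and it also makes the one-step policy-iteration claim explicit.

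On your delicate point, drop the first option, because $h_1(w+y)=w+h_1(y)$ fails in general. For $B=3$, $p(2)=1$ the single-reel Bellman equation gives $g_1=1$ and $h_1(2)=h_1(1)-2$, whereas the identity at $w_a=1<x=2$ would require $h_1(2)=1+h_1(1)$. Keep the second option instead: the paper's own proof likewise minimizes $c(w_n,x)+h_1(e(w_n,x))-h_1(w_n)$, i.e.\ it reads Equation~\eqref{eq:n(s)} in exactly that way.
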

\begin{proof}

By Lemma \ref{lemma:limiting} for the index policy $\pi(I)$ and the random policy $\pi(R)$, we have:
{
  \setlength{\abovedisplayskip}{4pt}
  \setlength{\belowdisplayskip}{4pt}
\begin{equation*} 
g^{\pi(I)} e - g^{\pi(R)}e = P_{\pi(I)}^* B_{\pi(I)}(\hat{g}_N^{\pi(R)}, \hat{h}_N^{\pi(R)}).
 \end{equation*}
 }
    Since all elements of $P_{\pi^{\text{I}}}^*$ are nonnegative, showing $g^{\pi(I)} e - g^{\pi(R)}e \leq  0$ would conclude the proof.
   We will first show that all elements of $B_{\pi(I)}(\hat{g}_N^{\pi(R)}, \hat{h}_N^{\pi(R)})$ are nonpositive, i.e. $B_{\pi(I)}(\hat{g}_N^{\pi(R)}, \hat{h}_N^{\pi(R)}) \leq 0 $.
     Thus, we calculate
{
  \setlength{\abovedisplayskip}{4pt}
  \setlength{\belowdisplayskip}{4pt}
\begin{equation} 
B_{\pi(I)}(\hat{g}_N^{\pi(R)}, \hat{h}_N^{\pi(R)}) = c_{\pi(I)} - \hat{g}_N^{\pi(R)} e + (P_{\pi(I)} - I)\hat{h}_N^{\pi(R)},
\label{eqn:B_index} 
\end{equation}
}
where $c_{\pi^{\text{I}}}$ is the cost vector under the index policy and $P_{\pi(I)}$ is the transition matrix induced by the index policy. Let $n^*(s)$ denote the action selected at state $s$ by the index policy according to equation \eqref{eq:n(s)} and consider the open form of \eqref{eqn:B_index} for any state $s=(w_1,\dots,w_N,x)$:
{
  \setlength{\abovedisplayskip}{4pt}
  \setlength{\belowdisplayskip}{4pt}
\begin{equation*}
\begin{aligned}
    B_{\pi(I)}(\hat{g}_N^{\pi(R)}, \hat{h}_N^{\pi(R)})(s) 
    =\; & c(w_{n^*}, x) - \hat{g}_N^{\pi(R)} + \sum_{x'} p(x') \hat{h}_N^{\pi(R)}(w_1, \ldots, e(w_{n^*}, x), \ldots, w_N, x')  \\ &  - \hat{h}_N^{\pi(R)}(w_1, \ldots, w_N, x).
    \end{aligned}
\end{equation*}
}

Substituting Corollary \ref{cor:h_Nhat+=h_1_corollary} for all $\hat{h}_N$ expressions, $B_{\pi(I)}(\hat{g}_N^{\pi(R)}, \hat{h}_N^{\pi(R)})(s) $ can be written in terms of the single-reel bias function $h_1$, where $w_n' = e(w_n,x)$ for $n=n^*$, and $w_n' =w_n$ otherwise:
{
  \setlength{\abovedisplayskip}{4pt}
  \setlength{\belowdisplayskip}{1pt}
\begin{equation}
\begin{aligned}
    B_{\pi(I)}(\hat{g}_N^{\pi(R)}, \hat{h}_N^{\pi(R)})(s)  
    =\; & c(w_{n^*}, x) - \hat{g}_N ^{\pi(R)}
    + \sum_{x'} p(x') \frac{1}{N} \sum_{n=1}^N \left( c(w_n', x') + h_1(e(w_n',x')) + (N-1) h_1(w_n') \right) \\ & 
    - \frac{1}{N} \sum_{n=1}^N \left( c(w_n, x) + h_1(e(w_n,x)) + (N-1) h_1(w_n) \right).
    \label{eq:B_1}
    \end{aligned}
\end{equation}
}
Note that by the Bellman equation \eqref{eq:bellmann_h1} for any state $w_n'$ we have
{
  \setlength{\abovedisplayskip}{4pt}
  \setlength{\belowdisplayskip}{4pt}
\begin{equation*}
    h_1(w_n') + g_1 = \sum_{x'}p(x') (c(w_{n^*}, x') + h_1(e(w_n',x')) ).
\end{equation*}
}
Substituting this in \eqref{eq:B_1} and using $\sum_{x'}p(x') = 1$ we can cancel out $h_1(w_n')$ terms and get
{
  \setlength{\abovedisplayskip}{4pt}
  \setlength{\belowdisplayskip}{4pt}
\begin{equation*}
\begin{aligned}
    B_{\pi(I)}(\hat{g}_N^{\pi(R)}, \hat{h}_N^{\pi(R)})(s) 
    =\; & c(w_{n^*}, x) - \hat{g}_N^{\pi(R)} + \frac{1}{N} \sum_{n=1}^N \left( g_1 + N h_1(w_n') \right)
    \\ & - \frac{1}{N} \sum_{n=1}^N \left( c(w_n, x) + h_1(e(w_n, x)) + (N-1) h_1(w_n) \right)
    \end{aligned}
\end{equation*}
}

We can use Lemma \ref{lemma:gN_g1} and \ref{lemma:gN_gNhat} to cancel gain terms out. Uniting the terms in a single summation we get
{
  \setlength{\abovedisplayskip}{4pt}
  \setlength{\belowdisplayskip}{4pt}
\begin{equation*}
\begin{aligned}
    B_{\pi(I)}(\hat{g}_N^{\pi(R)}, \hat{h}_N^{\pi(R)})(s)  
    = c(w_{n^*}, x) 
    - \frac{1}{N} \sum_{n=1}^N \left( c(w_n, x) + h_1(e(w_n, x)) + N h_1(w_n) - h_1(w_n) - N h_1(w_n')\right)
    \end{aligned}
\end{equation*}
}

Now we exclude the $(n^{*})^{th}$ term from the summation and write it separately.
{
  \setlength{\abovedisplayskip}{4pt}
  \setlength{\belowdisplayskip}{4pt}
\begin{equation*}
\begin{aligned}
    B_{\pi(I)}(\hat{g}_N^{\pi(R)}, \hat{h}_N^{\pi(R)})(s) 
    =\; & c(w_{n^*}, x) - \frac{1}{N} \left( c(w_{n^*}, x)  + h_1(e(w_{n^*}, x)) + N h_1(w_{n^*}) - h_1(w_{n^*}) - N h_1(w_{n^*}') \right)
    \\ &   - \frac{1}{N} \sum_{n=1, n \neq n^*}^N \left( c(w_n, x) + h_1(e(w_n, x)) + N h_1(w_n) - h_1(w_n) - N h_1(w_n') \right)
    \end{aligned}
\end{equation*}
}
Using $w_n' = e(w_n,x)$ for $n=n^*$ and $w_n' =w_n$ for $n \neq n^*$, and canceling out $N h_1(w_n)$ terms inside the summation we get
{
  \setlength{\abovedisplayskip}{4pt}
  \setlength{\belowdisplayskip}{4pt}
\begin{equation*}
\begin{aligned}
    B_{\pi(I)}(\hat{g}_N^{\pi(R)}, \hat{h}_N^{\pi(R)})(s) 
    =\; & c(w_{n^*}, x) - \frac{1}{N} \left( c(w_{n^*}, x)  + h_1(e(w_{n^*}, x)) + N h_1(w_{n^*}) - h_1(w_{n^*}) - N h_1(e(w_{n^*}, x)) \right)
    \\
     &  - \frac{1}{N} \sum_{n=1, n \neq n^*}^N \left( c(w_n, x) + h_1(e(w_n, x)) - h_1(w_n) \right) 
    \end{aligned}
\end{equation*}
}
By extending the summation domain and reorganizing, we can rewrite this as
{
  \setlength{\abovedisplayskip}{4pt}
  \setlength{\belowdisplayskip}{4pt}
\begin{equation*}
\begin{aligned}
    B_{\pi(I)}(\hat{g}_N^{\pi(R)}, \hat{h}_N^{\pi(R)})(s) 
    =\; & c(w_{n^*}, x) -  h_1(w_{n^*}) - h_1(e(w_{n^*},x)) - \frac{1}{N} \sum_{n=1}^N \left(  c(w_n,x) + h_1(e(w_n,x)) - h_1(w_n) \right)
    \end{aligned}
\end{equation*}
}
As $\left(  c(w_n,x) + h_1(e(w_n,x)) - h_1(w_n) \right)$ attains its minimum value for $n^*$ over $N$ reels, subtracting its average would yield a nonpositive value. Thus, we have $g^{\pi(I)} e - g^{\pi(R)}e \leq  0$  followed by $B_{\pi(I)}(\hat{g}_N^{\pi(R)}, \hat{h}_N^{\pi(R)})  \leq 0$.
\end{proof}

\subsection{Deep Reinforcement Learning Policy}
\label{sec:DRLPolicy}

While heuristic and index-based policies provide interpretable and computationally efficient decision rules, they may fail capturing complex dependencies across reels and over time. In particular, assignments are not independent since assigning a component to a specific reel affects future state transitions and the sufficiency of that reel for subsequent print jobs. At optimality, it may even be beneficial to reserve certain reels for specific types of components which is a strategic behavior that static rules cannot express.

To capture dynamic interactions and improve upon heuristic strategies, we employ the Deep Controlled Learning (DCL) framework developed by \citep{Temizoz2025}, which is a DRL-based approximate policy iteration algorithm that has shown to outperform results in various online decision making problems. Starting with an initial policy, this algorithm trains a Neural Network at each refinement step that maps simulated states into actions. The trained Neural Network serves as a new policy.

In our setting, we implement DCL to enhance reel assignment decisions beyond what static or greedy rules like best fit or index policies can offer. The idea is to leverage simulation data to learn a policy that accounts for both immediate costs and long-term consequences of assigning components to reels.

We initialize DCL with the Index Policy described earlier, using it to generate trajectories that reflect non-trivial baseline behavior. We employ two-step policy iteration and report the best-performing DRL policy obtained among these steps. Each NN receives features that capture both global and per-reel information. Training is performed offline using simulation environments built to mirror our MDP model. The DCL update mechanism iterates between generating state–action–cost trajectories under the current policy, and updating it via the NN to minimize the average cost over an infinite horizon. 

We experiment with two types of Multi-Layer Perceptron (MLP) neural networks. The first is a shared MLP, which uses the same feature set for all actions. This approach leverages global information on the current state and the impact of assigning the component to each reel. The features used in the shared MLP are summarized in Table \ref{tab:nn_features_MLP}.

\begin{table}[h]
    \centering
    \renewcommand{\arraystretch}{1} 
    \resizebox{0.7\columnwidth}{!}{
    \begin{tabular}{|l|l|}
        \hline
        \rowcolor[gray]{0.9} \textbf{Feature Description} & \textbf{Feature} \\ \hline
        \quad Normalized component weight & $x/B$ \\  
        \quad Normalized reel weights &  $w_n/B$ $\forall n \in \{ 1, \dots, N\}$ \\  
        \quad  Normalized next step reel weights &  $e(w_n,x)/B$ $\forall n \in \{ 1, \dots, N\}$ \\  
        \quad Indicator for immediate waste for each reel & $1_{w_n<x}$ $\in \{ 0,1\}$ $\forall n \in \{ 1, \dots, N\}$ \\ \hline
    \end{tabular}
    }
    \caption{MLP Features}
    \label{tab:nn_features_MLP}
\end{table}

The second type is an Action-specific MLP (AMLP), where features are tailored to each individual action, and a separate network is trained for each reel. This design allows the model to focus on localized information relevant to a specific reel while still incorporating some global context. Table \ref{tab:nn_features_amlp} lists the joint and action-specific features used in the AMLP. 

\begin{table}[h]
    \centering
    \renewcommand{\arraystretch}{1} 
    \resizebox{0.7\columnwidth}{!}{
    \begin{tabular}{|l|l|}
        \hline
        \rowcolor[gray]{0.9} \textbf{Feature Description} & \textbf{Feature} \\ \hline
        \textbf{Joint Features} &  \\ 
        \quad Normalized component weight & $x/B$ \\  
        \quad Normalized reel weights &  $w_n/B$ $\forall n \in \{ 1, \dots, N\}$ \\ 
        \textbf{Action specific features (for each action $n'$) } & \\
        \quad Normalized reel weight &  $w_{n'}/B$ \\ 
        \quad Normalized next step reel weight & $e(w_{n'},x)/B $ \\ 
        \quad Indicator for immediate waste & $1_{w_{n'}<x} \in \{0,1\}$ \\ 
        \hline
    \end{tabular}
    }
    \caption{AMLP Features}
    \label{tab:nn_features_amlp}
\end{table}

The neural network structure and the parameter settings used for all problem instances are shown provided in Table~\ref{tab:nn_simulation_parameters}. We adopt a fixed neural network architecture and shared training hyperparameters across all DRL models, with the exception of the horizon length. To ensure the learner can capture the impact of reel replacements on the cost, we calibrate the horizon length based on the expected number of steps required to observe multiple reel changes. Specifically, we select the horizon length such that each reel is expected to be replaced at least twice within the horizon which is reflected by $3 N \frac{B}{\mathbb{E}[X]}$.

\begin{table}[h]
    \centering
    \resizebox{0.7\columnwidth}{!}{
    \begin{tabular}{|cc|cc|}
\hline
\multicolumn{2}{|c|}{\textbf{Sampling and Simulation}} & \multicolumn{2}{c|}{\textbf{NN Structure}} \\ \hline
Number of Samples:            & 500000 & Number of Layers: & 3            \\
Number of Scenarios:          & 100     & Neurons per Layer:  & \{512, 256, 128\} \\
Horizon Length:               & $3 N \frac{B}{\mathbb{E}[X]}$      & Mini Batch Size:    & 64           \\
Length of the warm-up period: & 300       &                    &              \\ \hline
\end{tabular}
}
    \caption{Hyperparameters for simulation and neural network structure}
    \label{tab:nn_simulation_parameters}
\end{table}

\section{Results}\label{sec:results}

This section evaluates the performance of different policies for the reel assignment problem under various synthetic settings and a real world case study. We compare the random policy, first fit, best fit, index policy, and a DRL policy initiated by the index policy and trained using DCL.

We consider three synthetic component weight distributions summarized in Table~\ref{tab:cases} where all instances use a standardized reel capacity of $B=5000$ grams ensuring comparability across experiments. These distributions include both scenarios with dominant weight modes and those with more varied support, designed to reflect the diversity of print job requirements encountered in industrial 3D printing. In addition to the probability structure, standard deviation $\mathrm{Std}(X)$ and fit ratio $B/ \mathbb{E}[X]$ provide further insight into the variability and utilization potential of each distribution. 

\begin{table}[h]
\centering
\begin{tabular}{c|c|c|c|c|c|}
\cline{2-6}
                            & $X$ (gr) &
  $P(X)$ &
  $\mathbb{E}[X]$ & $\mathrm{Std}(X)$ & $B/ \mathbb{E}[X]$ \\ \hline
\multicolumn{1}{|c|}{Case 1} & [ 1016, 898, 651 ]  & [ 0.33, 0.34, 0.33 ] & 855.43 & 151.38 & 5.85 \\ \hline
\multicolumn{1}{|c|}{Case 2} & [ 1500, 1000, 500 ] & [ 0.5, 0.25, 0.25 ]  & 1125.00 & 414.58 & 4.44 \\ \hline
\multicolumn{1}{|c|}{Case 3} & [ 820, 792, 192 ]   & [ 0.61, 0.21, 0.18 ] & 701.08 & 238.77 & 7.13 \\ \hline
\end{tabular}
\caption{Component distributions used in synthetic experiments.}
\label{tab:cases}
\end{table}

The synthetic instances are designed to capture a range of component weight distributions encountered in production settings. Case 1 represents a balanced weight distribution where all component sizes appear with nearly equal probability. The standard deviation is relatively low, indicating limited variability, and the fit ratio of $5.85$ suggests that, on average, reels can accommodate around five to six components. This moderate alignment between component weights and reel capacity results in fragmentation that is less predictable. Case 2 models a highly skewed distribution, where the heaviest component dominates the arrivals, with medium and lighter components arriving less frequently. This leads to the highest standard deviation among all cases, reflecting significant variability in component sizes. This highlights the need for policies that can anticipate and manage uneven consumption of reel capacity. It has a lower fit ratio which simplifies decisions due to fewer assignments per reel. Case 3 introduces moderate skew, where one component size is dominant, but lighter components still appear with notable frequency. The standard deviation reflects this mixed structure. Despite having the highest fit ratio, this value is slightly above 7, which can actually increase the likelihood of leftover filament since the extra fraction of a component often cannot be utilized. Consequently, while Case 3 permits a greater number of component assignments per reel, it simultaneously escalates the risk of policy fragmentation when the available capacity marginally falls short of the requirements for the next component.

We also evaluate the policies using a real world component distribution obtained from an industrial 3D printing dataset. Case 4, summarized in Table~\ref{tab:cases_real}, includes a broader and more irregular support, with a small number of frequently occurring component weights and a long tail of rare types. This distribution reflects the variability and complexity of practical production environments. The component weights are generally lighter, and the expected value is $504.56$ grams, leading to a fit ratio of $9.91$, the highest among all cases. This implies that, on average, nearly ten components are assigned per reel, making early allocation decisions particularly impactful due to the increased number of assignments. However, since the fit ratio is just under the integer 10, the distribution aligns well with the reel capacity and minimizes systematic waste due to fractional misalignment. Case 4 creates a challenging setting where policies must balance frequent small allocations with the occasional need to accommodate a large component, often requiring foresight and flexibility to avoid inefficient reel usage.

\begin{table}[h]
\centering
\resizebox{0.7\columnwidth}{!}{
\begin{tabular}{l|c|c|c|c|c|}
\cline{2-6}
 &
  $X$ (gr) &
  $P(X)$ &
  $\mathbb{E}[X]$  & $\mathrm{Std}(X)$  & $B/ \mathbb{E}[X]$ \\ \hline
\multicolumn{1}{|l|}{Case 4} &
  \begin{tabular}[c]{@{}c@{}}[1098, 835, 597, 540, \\ 516, 503, 382, 152, \\ 127, 81, 71, 62]\end{tabular} &
  \begin{tabular}[c]{@{}c@{}}[ [0.11,  0.02, 0.01, 0.02, \\  0.04,  0.39, 0.33, 0.01, \\ 0.02, 0.01, 0.01, 0.03] ]\end{tabular} &
  504.56 & 244.21 & 9.91 \\ \hline
\end{tabular}
}
    \caption{Component distributions used in real world experiments.}
    \label{tab:cases_real}
\end{table}

For each case, we conduct experiments with \( N \in \{2, 3, 4, 5, 8\} \), resulting in a total of 15 synthetic and 5 real problem instances.
For the index policy, we apply a relative value iteration algorithm, consistent with Section 8.5.1 of \citet{Puterman2014}, where normalization is performed only in the final iteration.

\subsection{Numerical Results}\label{sec:NumericalResults}

We begin with synthetic experiments to compare policies under controlled settings. Table~\ref{tab:numerical_costs} reports the long run average filament waste per component for the synthetic problem instances under each policy. These results illustrate how the performance gap evolves as $N$ increases, and highlight the benefits of incorporating stochastic foresight into the assignment policy. Exact solutions are provided only for instances with a state space size remains below two million states, as solving larger instances becomes computationally intractable.

\begin{table}[h]
\centering
\resizebox{0.7\columnwidth}{!}{
\begin{tabular}{|c|c|c|c|c|c|c|c|c|}
\hline
Case &
  $N$ &
  \begin{tabular}[c]{@{}c@{}}Random\\ Policy\end{tabular} &
  \begin{tabular}[c]{@{}c@{}}First\\ Fit\end{tabular} &
  \begin{tabular}[c]{@{}c@{}}Best\\ Fit\end{tabular} &
  \begin{tabular}[c]{@{}c@{}}Index\\ Policy\end{tabular} &
  MLP &
  AMLP &
  \begin{tabular}[c]{@{}c@{}}Exact\\ Solver\end{tabular} \\ \hline
\multirow{5}{*}{Case 1} & 2 & 90.344 & 66.041 & 60.450 & 38.731 & 25.327 & 25.309 & 25.053 \\ \cline{2-9} 
                        & 3 & 90.312 & 61.308 & 60.181 & 35.177 & 15.635 & 15.569 & 13.022 \\ \cline{2-9} 
                        & 4 & 90.299 & 60.635 & 60.141 & 34.893 & 15.146 & 14.960 & -      \\ \cline{2-9} 
                        & 5 & 90.285 & 60.557 & 60.069 & 34.684 & 15.331 & 15.180 & -      \\ \cline{2-9} 
                        & 8 & 90.349 & 60.139 & 59.651 & 34.471 & 14.233 & 16.426 & -      \\ \hline
\multirow{5}{*}{Case 2} & 2 & 96.999 & 45.733 & 21.736 & 16.730 & 8.006  & 8.027  & 7.694  \\ \cline{2-9} 
                        & 3 & 96.844 & 31.204 & 6.227  & 2.701  & 0.785  & 0.760  & 0.520  \\ \cline{2-9} 
                        & 4 & 97.071 & 25.772 & 1.976  & 0.404  & 0.110  & 0.124  & 0.038  \\ \cline{2-9} 
                        & 5 & 96.941 & 22.462 & 0.654  & 0.060  & 0.031  & 0.034  & 0.003  \\ \cline{2-9} 
                        & 8 & 96.886 & 16.304 & 0.021  & 0.000  & 0.004  & 0.001  & -      \\ \hline
\multirow{5}{*}{Case 3} & 2 & 65.215 & 38.585 & 18.287 & 16.277 & 11.490 & 12.199 & 10.684 \\ \cline{2-9} 
                        & 3 & 65.166 & 28.154 & 17.840 & 13.336 & 8.766  & 8.736  & -      \\ \cline{2-9} 
                        & 4 & 65.175 & 23.294 & 17.816 & 12.695 & 7.778  & 7.777  & -      \\ \cline{2-9} 
                        & 5 & 65.160 & 20.579 & 17.792 & 12.421 & 8.130  & 7.670  & -      \\ \cline{2-9} 
                        & 8 & 65.206 & 18.298 & 17.683 & 12.231 & 11.190 & 7.832  & -      \\ \hline
\end{tabular}
}
\caption{Average filament waste per component for each policy under three synthetic cases.}
\label{tab:numerical_costs}
\end{table}

As expected, the random policy consistently performs the worst across all cases, reflecting its lack of informed decision-making. First Fit shows moderate improvement over the random baseline but remains constrained by its rigid allocation rule, particularly in Cases 1 where the component weights are more balanced across categories. This leads to inefficiencies as First Fit fails to account for the risk of fragmentation, especially when mid-sized components frequently arrive. Best Fit offers additional gains by reducing immediate fragmentation, yet its myopic nature limits further improvement, particularly in Case 1 where component weights are relatively close in size. Best Fit provides moderate gains but quickly stabilizes, especially in Cases 1 and 3, where component weights are close in size and the algorithm lacks adaptability to prevent long-term inefficiencies

The Index policy achieves substantial improvements over all heuristic baselines, particularly in skewed distributions like Case 2, where anticipation of future waste is critical. Unlike the myopic heuristics, the Index policy leverages foresight to balance current fit against the risk of future fragmentation, resulting in consistently lower waste levels. 

Across all heuristic policies, the average cost decreases as the number of reels increases, due to the greater flexibility. However, the relative gap between the Index policy and heuristics persists across all $N$, highlighting the value of strategic decision-making beyond simple fit-based rules.

The DRL-based approaches under the DCL framework include two models: the MLP policy, which maps states directly to actions, and the AMLP policy, which evaluates actions explicitly by integrating action features into the input. Both policies outperform heuristic baselines (Random, First Fit, Best Fit) across all cases, but their relative strengths differ depending on the distribution characteristics.

Compared to heuristics, MLP demonstrates robust performance in skewed distributions, particularly in Case 2, where the dominance of the heaviest component simplifies the learning problem. In such settings, MLP effectively learns to prioritize heavy components, driving waste close to zero after $N=5$. However, MLP's direct state-to-action mapping limits its precision in more complex distributions, especially when dealing with moderately skewed or lighter components, as in Case 3.

In contrast, AMLP leverages an action-based architecture, where the policy explicitly evaluates the outcomes of potential actions given the current state. This design enables AMLP to make more nuanced decisions, especially in cases like Case 3, where subtle trade-offs between immediate packing efficiency and future reel availability are crucial. As a result, AMLP achieves lower waste than MLP in these cases and closely approaches the Exact Solver performance, even at small $N$.

While the DCL policies consistently outperform the Index policy, we observe that the performance of DCL methods does not always improve with increasing $N$, unlike the Index policy, which systematically benefits from more reels. This is likely due to limitations in the training process, such as suboptimal hyperparameter settings or inadequate feature representations that fail to fully capture the expanded decision space as $N$ increases. Consequently, the learned policies may not generalize perfectly across different capacities, particularly in higher $N$ where the combinatorial complexity of decisions grows.

For the specific case of $N = 2$ reels, where the exact optimal policy can be computed, we can directly assess the nonoptimality of heuristic and learned policies. Figure~\ref{fig:nonoptimality} visualizes the average waste across methods for each synthetic case confirming that the Index policy consistently outperforms Best Fit and First Fit. The figure also reports the degree of non-optimality, defined as the relative gap between a policy’s waste and that of the exact optimal solution for the same case. In all cases, the gap for DCL policies is very small, while heuristic policies can deviate substantially from optimality. This demonstrates that the DCL approach achieves near-optimal performance in all synthetic scenarios.

\begin{figure}[h]
\centering
\includegraphics[width=0.99\textwidth]{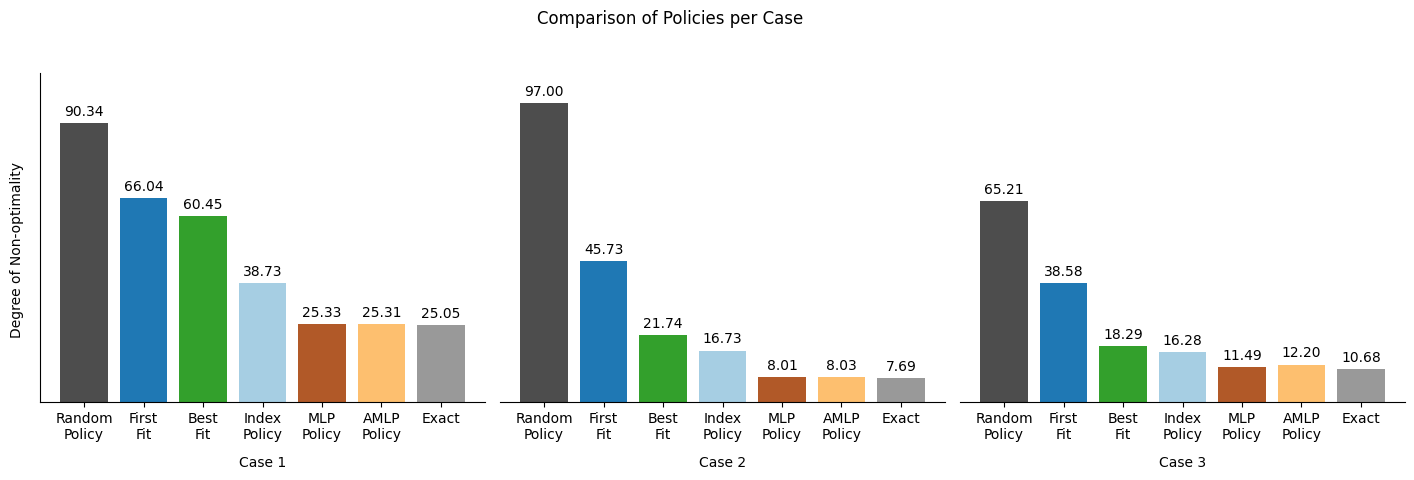}
\caption{Comparison of average filament waste under different policies for 
 $N = 2$  reels.}
\label{fig:nonoptimality}
\end{figure}

The performance gaps across methods are most pronounced in moderately skewed distributions, where achieving precise trade-offs between immediate packing and future flexibility is inherently more challenging. Nonetheless, the results for $N=2$ demonstrate that the DCL framework—particularly the AMLP variant—effectively captures the structure of the reel assignment problem in low-capacity settings. The learned policies achieve waste levels close to the optimal, without relying on handcrafted policy design or structural assumptions. This highlights the potential of data-driven approaches to generalize complex decision-making in sequential allocation problems.

\subsection{Case Study}\label{sec:CaseStudy}
We now evaluate the policies on a real-world component distribution derived from an industrial 3D printing dataset. This instance, denoted as Case 4, includes a mixture of frequently used small components and occasional large ones, reflecting the variability commonly observed in practical production settings. Due to the broad and irregular support of the distribution, even for $N=2$ the state space grows rapidly. As a result, we omit the exact solution for this case, as it becomes computationally intractable.

To examine scalability, we assess how policy performance varies with the number of available reels \( N \). Figure~\ref{fig:sensitivity_N} shows the long run average filament loss as a function of \( N \), using the same five policies as in the synthetic experiments. As expected, increasing the number of reels reduces material waste by providing greater flexibility in assignment decisions.

\begin{figure}[h]
    \centering
    \includegraphics[width=0.6\textwidth]{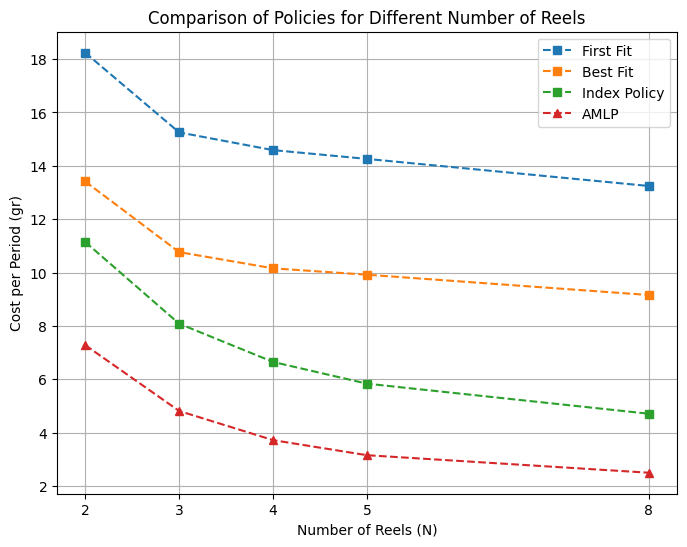}
    \caption{Average filament loss as a function of number of reels $N$ in Case 4.}
    \label{fig:sensitivity_N}
\end{figure}

Across all values of \( N \), the DRL policy consistently outperforms the alternatives. While rule-based policies such as First Fit and Best Fit also benefit from more reels, their relative performance gap remains large. The index policy scales more effectively than rule-based methods, but DRL exhibits the strongest ability to exploit larger reel sets effectively. 

As expected, more reels create less waste. However, in practice, maintaining only a small number of active reels per color is desirable to avoid clutter and complexity on the factory floor, especially given the wide variety of filament colors. Furthermore, each active reel must be fully prepared in advance which limits how many reels can be kept ready for immediate use. These setup requirements, combined with manual handling and limited workspace, make it impractical to manage a large number of active reels simultaneously. 

To determine a sufficient number of reels for implementation, we quantify the marginal benefit with respect to $N$. We apply a quantitative elbow method based on second-order differences of the DRL cost curve in Figure~\ref{fig:sensitivity_N}. Let $f(N)$ denote the average filament loss under the DRL policy with $N$ reels for Case 4. We compute the second discrete difference $\Delta^2 f(N) = f(N+1) - 2f(N) + f(N-1)$ as a proxy of curvature plotted in Figure~\ref{fig:elbow}. The second difference peaks sharply at $N=4$, and remains high at $N=5$ indicating that most of the gain in filament efficiency is captured by this range. Beyond $N=5$, the marginal improvements become smaller, suggesting diminishing returns. Hence, based on the elbow criterion, choosing $N=5$ reels would be a practically sufficient configuration that balances waste reduction and resource usage.

\begin{figure}[h]
    \centering
    \includegraphics[width=0.65\textwidth]{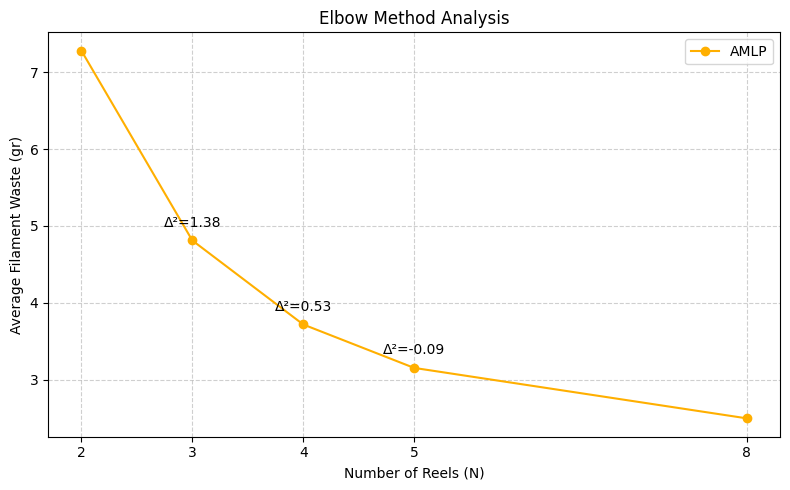}
    \caption{$\Delta^2f$ as a function of number of reels $N$}
    \label{fig:elbow}
\end{figure}

These results confirm that data driven policies offer substantial advantages over traditional heuristics in real world settings. Among them, the index policy stands out for its simplicity, transparency, theoretic background, and strong performance. By effectively leveraging stochastic information and adapting to system constraints, it consistently narrows the gap to optimal performance without requiring any training or data beyond the known job distribution. Its ability to adapt to stochastic variation and limited reel capacity makes it an attractive candidate for practical deployment. Moreover, it serves as a strong initial policy for approximate policy iteration algorithms to achieve a lower overall waste. The index policy remains a highly competitive and interpretable alternative. Finally, the elbow-based analysis indicates that a moderate number of reels is sufficient to capture most of the efficiency gains, reinforcing the practical relevance of structured online decision making in 3D printing environments.

\section{Conclusion and Discussion}\label{sec:discussion}

This work addresses the online reel assignment problem in filament-based 3D printing, modeled as an $N$-bounded online cutting-stock problem under uncertainty. We propose a sequential decision-making framework that minimizes filament waste by combining MDP-based structural insights with DRL. The proposed index policy is interpretable, theoretically grounded, and computationally efficient, while the DRL policy achieves near-optimal performance and outperforms heuristics like First Fit and Best Fit across all tested scenarios.

This work makes several contributions to both the theory and practice of sequential material allocation. First, we establish new structural results for the bounded-capacity reel assignment problem, explicitly addressing the operational reality that reels are used immediately after replacement and thus never start at full weight. Our MDP structure necessitates a novel bias function formulation within our index policy, setting it apart from existing index-based approaches. Second, we provide a closed-form expression for the index and show how one-step policy improvement effectively bridges analytical understanding with learning algorithms. Finally, we demonstrate that integrating these structural insights with deep reinforcement learning yields substantial gains in performance and scalability, enabling near-optimal decision-making in real-time production environments.

We find that the random policy performs poorly and is largely insensitive to the number of available reels, which limits its practical utility despite analytical tractability. The index policy significantly improves over baseline heuristics, especially in instances with skewed or diverse component weight distributions. Our DRL policy, initialized from the index policy, consistently achieves superior performance, confirming that learning-based methods can effectively adapt to the complexities of sequential allocation with capacity constraints. A key insight driving these results is that, unlike in classical models, reels in our setting are used immediately after replacement and never start with full weight, necessitating a tailored bias function within our index policy.

These findings establish a new bridge between analytical optimization and learning-based policies for real-time material planning in additive manufacturing. The combination of structural decomposition and one-step policy improvement provides a pathway for integrating theoretical guarantees into practical algorithms. This framework enhances operational efficiency in sequential allocation tasks where resource capacities are bounded and replenishment is not full, contributing to both the manufacturing and stochastic control literature.

Our study focuses on a specific operational setting where component orders are independent and reel replacements are immediate. While our policies are robust under these conditions, extensions to correlated component sequences, uncertainties in reel replacement, or additional operational constraints have yet to be explored. Moreover, the DRL approach, while effective, requires computational resources for training that may be a barrier for some real-world implementations.

We recommend practical deployment of the index and DRL policies in filament-based additive manufacturing settings where material waste reduction is critical. For future research, extending the framework to account for correlated demand patterns, partial reel replenishment, or dynamic operational constraints would enhance applicability. Further, exploring lightweight or online learning techniques could reduce the computational burden of DRL training, facilitating broader adoption in industry.

\bibliographystyle{elsarticle-harv} 
\bibliography{cas-refs}

\end{document}